\numberwithin{equation}{section}
\newtheorem{theorem}{Theorem}[section]
\newtheorem{lemma}[theorem]{Lemma}
\newtheorem{corollary}[theorem]{Corollary}
\newtheorem{example}[theorem]{Example}
\newtheorem{definition}[theorem]{Definition}
\newtheorem{proposition}[theorem]{Proposition}
\newtheorem{remark}[theorem]{Remark}
\newcommand\beq{\begin{equation}}
\newcommand\de{\delta}
\newcommand\eeq{\end{equation}}
\newcommand\re{\mathrm {Re~}}
\newcommand\im{\mathrm {Im~}}
\newcommand\ii{\mathrm i}
\newcommand\D{\mathbb D}
\newcommand\C{\mathbb C}
\newcommand\R{\mathbb R}
\newcommand\e{\mathrm e}
\newcommand\Pick{\mathcal P}
\newcommand\df{\stackrel{\rm def}{=}}
\newcommand\ont{o_{\rm nt}}
\newcommand\nt{\stackrel{\mathrm {nt}}{\to}}
\newcommand\nn{\nonumber}
\DeclareMathOperator{\schur}{schur}
\DeclareMathOperator\diag{diag}
\DeclareMathOperator\rank{rank}
\DeclareMathOperator\dist{dist}
\let\phi\varphi
\begin{document}
\title{Pseudo-Taylor expansions and the Carath\'{e}odory-Fej\'{e}r  problem}

\author{Jim Agler, Zinaida A. Lykova and N. J. Young}

\date{6 January 2011}

\begin{abstract} We give a new solvability criterion for the {\em  boundary Carath\'{e}odory-Fej\'{e}r problem}: given a  point $x \in \R$ and,  a finite set of target values $a^0,a^1,\dots,a^n \in \R$, to construct a function $f$ in the Pick class such that
the limit of $f^{(k)}(z)/k!$ 
as $z \to x$ nontangentially in the upper half plane is $a^k$.  The criterion is in terms of positivity of an associated Hankel matrix.  The proof is based on a reduction method due to Julia and Nevanlinna. 
\end{abstract}

\thanks {Jim Agler was partially supported by National Science Foundation Grant DMS 0801259. N. J. Young was partially supported by London Mathematical Society Grant 4918 and by EPSRC Grant EP/G000018/1.}

\keywords{Pick class, boundary interpolation, Hankel matrix, Schur complement.}

\maketitle
\markboth{Jim Agler, Zinaida A. Lykova and N. J. Young}
{Pseudo-Taylor expansions and the  Carath\'{e}odory-Fej\'{e}r  problem}

\section{Introduction} \label {intro}

A theme of classical analysis is to ascertain whether a given finite sequence of complex numbers comprises the initial Taylor coefficients of an analytic function of a specified class on a domain $U$ about some point $x$ of $U$.  In the case that $U$ is the upper halfplane
\[
\Pi \df \{ z\in\C: \im z > 0 \}
\]
and the specified class is the  Pick class $\Pick$ we obtain the much-studied {\em Carath\'{e}odory-Fej\'{e}r problem} \cite{CF, bgr}.  Here
$\Pick$ is defined to be the set of analytic functions $f$ on $\Pi$ such that $\im f \geq 0$ on $\Pi$.  We can equally well ask the same question for a point $x \in \partial U$, the boundary of $U$, but then, since an analytic function on $U$ will not 
in general have a Taylor expansion about every point in $\partial U$, there is a question as to how we should interpret ``Taylor coefficients".  The simplest answer is just to restrict attention to functions which are analytic at the interpolation point $x$.  We then arrive at the {\em boundary Carath\'{e}odory-Fej\'{e}r problem}:\\

\noindent {\bf Problem $\partial CF\Pick(\R)$} \quad {\em
Given a point $x\in \R$ and $a^0,a^1,\dots,a^n \in \R$, find a function $f$ in the Pick class such that $f$ is analytic at  $x$ and
\beq \label{interpcond_anal}
\frac{f^{(k)}(x)}{k!} = a^k, \qquad k=0,1,\dots,n.
\eeq
}

In \cite{ALY10} we gave a new criterion for Problem $\partial CF\Pick(\R)$ to have a solution $f$.  Roughly speaking, such an $f$ exists if and only if a certain Hankel matrix constructed from the $a^k$ is either positive definite or {\em southeast-minimally positive} (to be defined in Section \ref{weak} below).    However,  the requirement that the solution $f$ be analytic at $x$ is unnecessarily strong.   There is a natural weaker notion of solution,
in which the role of Taylor coefficients is played by a generalization appropriate to points on the boundary of $\Pi$.  We call these {\em pseudo-Taylor coefficients}\footnote{Although this notion has been in use for 90 years or more, we cannot find an agreed name for it, and so are herewith introducing one.}.
In this paper we show that the same existence criterion applies to solvability in this weaker sense.  It follows that the problem has a solution in the original (analytic at $x$) sense if and only if it has a solution in the weaker sense.

As in \cite{ALY10}, our main tool is a technique of reduction of functions in the Pick class that is originally due to G. Julia \cite{Ju20} and was greatly strengthened by Nevanlinna \cite{Nev1922}.  One virtue of this approach is that it is elementary; it does not depend on the theories of operators or Hilbert function spaces.  The proof is by induction on $n$ in combination with Julia reduction and an identity for Hankel matrices.  A point of the paper is that the methods of \cite{ALY10} remain valid for the more delicate problems associated with weak solutions.  We shall use some results from that paper, and for convenience we often refer to \cite{ALY10} for proofs even of statements that are well established.

The present paper could be regarded as a correction of Nevanlinna's own treatment: we rectify an oversight that led him to an incorrect statement about solvability.   A discussion of Nevanlinna's assertion and a counterexample are given in \cite[Section 10]{ALY10}.

The problem we study has an extensive history, which we discuss in \cite[Sections 1 and 10]{ALY10}. We mention in particular a very recent paper \cite{Bol10}.

The paper is organised as follows. In Section \ref{taylor}
we define weak solutions and the notion of a pseudo-Taylor expansion of $f \in \Pick$ about $x \in \R$.
In Section \ref{reduction} we describe Julia's reduction procedure and its inverse, and give important properties of  these procedures. 
In Section \ref{relax} we show that positivity of a Hankel matrix is necessary and sufficient for weak solvability of a relaxation of Problem $\partial CF\Pick(\R)$, in which the last of the interpolation conditions (\ref{interpcond_anal}) is relaxed (equality is replaced by an inequality).  
In Section \ref{weak}  we prove our main theorem:  Problem $\partial CF\Pick$ has a solution in the weak sense if and only if its associated Hankel matrix is either positive definite or southeast-minimally positive.  

We shall write the imaginary unit as $\ii$, in Roman font, to have $i$ available for use as an index.   We denote the open unit disc by $\D$.

 
\section{Pseudo-Taylor expansions} \label{taylor}
  Recall that for any domain $U$ and any $x\in\partial U$, a subset $S$ of $U$ {\em approaches $x$ nontangentially} if $x$ is in the closure of $S$ and the quotient $|z-x|/\dist(z, \partial U)$ is bounded for $z\in S$, and that $z\to x$ {\em nontangentially in $U$} if $z\to x$ and $z$ lies in some set $S$ that approaches $x$ nontangentially.
We shall use the notation $z\nt x$ to mean that $z \to x$ nontangentially in a given domain $U$.
 For a function 
$h$ analytic on $U$ and a positive integer $n$, we write 
\[ 
h(z) = \ont ((z-x)^{n}) 
\]
to mean that $\frac{h(z)}{(z-x)^{n}} \to 0$ as $z \nt x$.   A function $f$ analytic on $U$ will be said to have a {\em pseudo-Taylor expansion of order $n$ about $x\in\partial U$} if there exist $c^0, c^1, \dots,c^n \in \C$ such that 
\beq \label{psn}
 f(z) = c^0 + c^1 (z-x) + \dots + c^n (z-x)^{n} + \ont ((z-x)^{n}).
\eeq
We call $c^j$ the {\em  $j$th pseudo-Taylor coefficient} of $f$ at $x$.  In fact pseudo-Taylor coefficients are not well defined in complete generality (for example, if $U$ has a cusp at $x$), but it is easy to see that if there is a line segment in $U$ that approaches $x$ nontangentially then $c^j$ is uniquely determined.  In particular, pseudo-Taylor expansions of a function in $\Pick$ about a point $x\in\R$ are unique when they exist.

Pseudo-Taylor expansions are of course a form of asymptotic expansion, but are sufficiently special to deserve a separate name.

Note the special case $n=1$: $f$ has a pseudo-Taylor expansion of order $1$ if and only if $f$ has a nontangential limit $a^0$ and an angular derivative $a^1$ at $x\in\partial U$, and then the expansion of $f$ is $a^0 + a^1(z-x) + \ont(z-x)$.  See \cite{car54, S2} for the notion of angular derivative.  Pseudo-Taylor coefficients can thus be regarded as generalizations of angular derivatives.

We define a function $f \in \Pick$ to be a {\em weak solution} of Problem $\partial CF\Pick(\R)$ if $f$ has a pseudo-Taylor expansion of order $n$ at $x$ and the $j$th pseudo-Taylor coefficient of $f$ at $x$ is $a^j$ for $j=0,1,\dots,n$.  Thus $f$ is a weak solution if and only if 
\beq \label{psTay_int}
 f(z) = a^0 + a^1 (z-x) + \dots  + a^n (z-x)^{n} + R_n(z)
\eeq
where
\beq \label{psTay2_int}
 \frac{R_n(z)}{(z-x)^{n}} \to 0
\eeq
as $z \to x$ nontangentially in $\Pi$.

This is essentially the notion of solution used by R. Nevanlinna \cite{Nev1922}; we believe he was the first mathematician to study such problems\footnote{Actually Nevanlinna took the interpolation node $x$ to be $\infty$.}, and subsequent authors (e.g. \cite{BD,BolKh08,Geo05}) have used equivalent notions.

Pseudo-Taylor expansions behave very differently from Taylor expansions, as the following examples show.
\begin{example}\label{ex1}
\rm The function $f(z)= z/(1-z\log z)$ is in $\Pick$ and has the pseudo-Taylor expansion
\[
f(z) = z + \ont(z)
\]
but no pseudo-Taylor expansion of order $2$ about $0$.
\end{example}
\begin{example} \label{ex2} \rm
Let $\nu$ be a positive integer, $\nu \ge 4$, and let
$$
f_{\nu}(z)= - \sum_{k=1}^{\infty} \frac{1}{k^{\nu} z + k^{\nu-1}}, \;\; z \in \Pi.
$$
Then $f_{\nu}\in\Pick$ has the pseudo-Taylor expansion 
\beq\label{expandfnu}
f_{\nu}(z)= -\zeta(\nu-1) +\zeta(\nu -2)z - \dots + (-1)^{\nu} \zeta(2)z^{\nu -3} +  \ont(z^{\nu-3})
\eeq
of order $\nu-3$ about $0$, but has no pseudo-Taylor expansion of order $\nu-2$.
We justify this assertion in the Appendix.
\end{example}
\begin{example}\label{ex3} \rm
The function
\[
f(z) = -\frac{1}{\mathrm e}\sum_{k=1}^\infty \frac{1}{k!(z+\frac{1}{k})}
\]
is in $\Pick$ and has a pseudo-Taylor expansion of infinite order about $0$, to wit
\[
f(z) = -1+ 2z -5z^2 + 15z^3-52z^4+ \dots =\sum_{n=0}^\infty (-1)^{n+1} A_n z^n
\]
where $A_0=1$ and, for $n\geq 0$,
\[
A_{n+1} = 2A_n + \sum_{r=1}^n {n \choose r} A_{n-r}.
\]
That is, $f$ has
pseudo-Taylor expansions of all orders about $0$, but $f$ is clearly not analytic at $0$.
\end{example}

There are two other natural ways that a function $f$ could be regarded as a solution of a boundary interpolation problem without necessarily being analytic at the interpolation node.  Let us say that $f\in\Pick$ is a {\em nontangential solution} of Problem $\partial CF\Pick$
if
\[
\lim_{z\nt x} \frac{f^{(k)}(z)}{k!} = a^k \qquad \mbox { for } k=0,1,\dots,n,
\]
and is a {\em radial solution} of Problem $\partial CF\Pick$ if
\[
\lim_{y\to 0+} \frac{f^{(k)}(x+\ii y)}{k!} = a^k \qquad \mbox { for } k=0,1,\dots,n.
\]
Fortunately it transpires that the notions of weak, nontangential and radial solution all coincide.
The following theorem is widely known; see for example \cite[VI-1]{S2} and \cite[Corollary 7.9]{BD} for the corresponding statement for functions analytic on the open disc $\D$.

\begin{theorem}\label{Taylor} Let  $f$ be a function 
 analytic on $\Pi$, let $x \in \R$, let $n$ be a non-negative integer and let $a^j \in \C$ for $j=1,2, \dots, n$. Then the following statements are equivalent:
\begin{enumerate}
\item[\rm (i)] $f$ has the pseudo-Taylor expansion 
\beq \label{pseudo-Taylor}
 f(z) = a^0 + a^1 (z-x) + \dots  + a^n (z-x)^{n} + \ont ((z-x)^n), 
\eeq
of order $n$ about $x$;

\item[\rm (ii)]  the derivatives $f^{(k)}$, $k=0,1,\dots,n$, have nontangential limits at $x$ and
\beq \label{ang_derv}
\lim_{z \nt x}\frac{f^{(k)}(z)}{k!} = a^k, \qquad k=0,1,\dots,n; \nn
\eeq

\item[\rm (iii)]  the derivatives $f^{(k)}$, $k=0,1,\dots,n$, have radial limits at $x$ and
\beq \label{rad_derv}
\lim_{y\to 0+}\frac{f^{(k)}(x+\ii y)}{k!} = a^k, \qquad k=0,1,\dots,n. \nn
\eeq
\end{enumerate}
\end{theorem}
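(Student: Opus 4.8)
The plan is to deduce the theorem from its classical counterpart on the disc --- the forms of it cited from \cite[VI-1]{S2} and \cite[Corollary 7.9]{BD} --- by means of a Cayley transform, and, for completeness, to indicate a direct argument. Fix a M\"obius map $\psi$ carrying $\D$ onto $\Pi$ with $\psi(\omega)=x$ for some $\omega\in\T$. Since $\psi$ is analytic near $\omega$ with $\psi'(\omega)\neq 0$, it is conformal and bi-Lipschitz there, so it carries nontangential approach regions at $\omega$ onto nontangential approach regions at $x$, and radial segments at $\omega$ onto arcs that approach $x$ nontangentially; hence the three modes of approach in (i)--(iii) correspond, under $z=\psi(w)$, to the same three modes at $\omega$. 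Because $\psi(w)-x$ has a genuine convergent Taylor expansion in $w-\omega$ with nonzero linear coefficient, substituting it into \eqref{pseudo-Taylor} and regrouping turns a pseudo-Taylor expansion of $f$ of order $n$ at $x$ into one of $g\df f\circ\psi$ of order $n$ at $\omega$, with coefficients obtained from $a^0,\dots,a^n$ by an invertible triangular linear map; similarly the chain rule expresses $g^{(k)}$, $k\le n$, through $f^{(j)}\circ\psi$, $j\le k$, with coefficients analytic at $\omega$, so that conditions (ii) and (iii) transfer as well. Granting the disc statement, the result on $\Pi$ follows. The only things needing care --- preservation of nontangential approach by $\psi$, and the behaviour of asymptotic expansions under composition with an analytic change of variable (Fa\`a di Bruno) --- are routine.

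A direct proof would run as follows. The implication (ii)$\Rightarrow$(iii) is immediate, radial approach being a special case of nontangential approach. For (i)$\Rightarrow$(ii) the essential point is that if $h$ is analytic on $\Pi$ and $h(z)=\ont((z-x)^m)$, then $h^{(k)}(z)=\ont((z-x)^{m-k})$ for $0\le k\le m$; this is obtained from Cauchy's formula over the circle $\{\,w:|w-z|=\half\im z\,\}\subset\Pi$, on which $\im w\ge\half\im z$ and $|w-x|\le|z-x|+\half\im z$, so that a circle centred at a point of a Stolz angle at $x$ lies in a fixed wider Stolz angle and, once $z$ is close to $x$, carries the bound $|h(w)|\le\ep|w-x|^m$; the Cauchy estimate $|h^{(k)}(z)|\le k!\,(\half\im z)^{-k}\max_{|w-z|=\frac12\im z}|h(w)|$ together with $\im z\le|z-x|$ then yields $|h^{(k)}(z)|\le C\ep|z-x|^{m-k}$. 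Applying this to $h(z)=f(z)-\sum_{j=0}^{n}a^j(z-x)^j$ and adding back the derivatives of the polynomial (which are continuous and equal $k!\,a^k$ at $x$) gives (ii).

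There remains (iii)$\Rightarrow$(i), which carries the real content, and I would argue by induction on $n$. The base case $n=0$ is exactly the assertion that an analytic function on $\Pi$ possessing a radial limit at $x$ has the same nontangential limit there. This is \emph{false} for arbitrary analytic functions, but it holds under the hypotheses in which the theorem is used --- functions of bounded type, and in particular members of $\Pick$ --- by Lindel\"of's theorem, and is precisely what the cited disc results supply. For the inductive step, assume the theorem for $n-1$: then (iii) for $n$ gives (iii) for $n-1$, whence by induction $f$ has the order-$(n-1)$ pseudo-Taylor expansion together with $f^{(k)}(z)\to k!\,a^k$ nontangentially for $k\le n-1$; the base case applied to $f^{(n)}$ yields $f^{(n)}(z)\to n!\,a^n$ nontangentially as well; and then integrating $n$ times from $x$ along paths confined to a Stolz angle --- legitimate because the relevant boundary values are the known radial and nontangential limits and the integrands are bounded near $x$ --- promotes the expansion from order $n-1$ to order $n$, the error integrals being controlled by the length of the path, which is $O(|z-x|)$ in a Stolz angle. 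The main obstacle throughout is the $n=0$ radial-to-nontangential passage: it is the one genuinely nontrivial ingredient, it requires a growth restriction in order to be true at all, and everything else is bookkeeping built on top of it.
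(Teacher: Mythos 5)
Your direct argument is, in all essentials, the paper's own proof: (ii)$\Rightarrow$(iii) is immediate; (i)$\Rightarrow$(ii) is done exactly as in the paper, via Cauchy's formula on the circle of radius $\tfrac12\dist(z,\R)$ about $z$, which stays in a slightly wider Stolz angle (one small slip: the inequality you need at the end is the Stolz-angle bound $|z-x|\le K\im z$, which gives $(\im z)^{-k}\le K^k|z-x|^{-k}$, not ``$\im z\le|z-x|$'', which points the wrong way); and (iii)$\Rightarrow$(i) is the same induction with Lindel\"of as the base case and integration along segments in a Stolz angle to raise the order (the paper integrates once per step by applying the inductive hypothesis to $f'$, you integrate $n$ times at the end after applying the base case to $f^{(n)}$ --- unwound, these are the same argument). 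The Cayley-transform reduction you sketch first is also viable in principle, but the disc results you cite ([S2], [BD]) are stated for Schur-class, i.e.\ bounded, functions, so $f\circ\psi$ is not directly covered: one must also compose in the target with a M\"obius map carrying the closed upper half-plane into $\overline{\D}$, and note that the radius at $x$ corresponds not to the radius at $\omega$ but to a nontangential curve, so a further (Lindel\"of-type) step is needed to match condition (iii) on the two sides.

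The substantive point is the growth restriction you raise at the base case, and there your write-up does not close. You are right that ``radial limit implies nontangential limit'' fails for general analytic functions on $\Pi$ (e.g.\ $\e^{1/z^2}$ at $x=0$ has radial limit $0$ but no nontangential limit), and you propose to prove the theorem only for functions of bounded type, in particular for $\Pick$. But your inductive step then invokes that base case for $f^{(n)}$, and bounded type (or normality) is \emph{not} inherited by derivatives --- there are bounded analytic functions whose derivative is not of bounded type --- so the application of Lindel\"of to $f^{(n)}$ is not licensed by your hypothesis on $f$, and under your restricted hypotheses the induction does not go through as stated; this is the gap. To be fair, the paper's own induction, once unwound, rests on exactly the same use of Lindel\"of's Principle for the merely analytic functions $f',\dots,f^{(n)}$ (it simply quotes the principle without a growth hypothesis), so you have put your finger on a genuine delicacy in the statement and proof; but your patch does not repair it. To make the restricted statement rigorous one must either show that, under (iii), $f^{(n)}$ is nontangentially bounded near $x$ (the higher-order Carath\'eodory--Julia circle of ideas, as in \cite{BD,BolKh08}), or run the induction through Julia--Nevanlinna reduction, which stays inside $\Pick$, rather than through differentiation.
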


\begin{proof} 
(ii) $\Rightarrow $(iii) is trivial.  We prove
(iii) $\Rightarrow $(i). 
The statement is true when $n=0$: this is precisely Lindel\"of's Principle \cite[Theorem 8.7.1]{Kr}. Let us assume it is true for $n-1$, where $n \ge 1$, and deduce that it holds for $n$.

Suppose that (iii) holds, and let $g = f'$. Then 
\[
\lim_{y\to 0+}\frac{g^{(k)}(x+\ii y)}{k!}=\lim_{y\to 0+}\frac{f^{(k+1)}(x+\ii y)}{k!} = (k+1) a^{k+1}
\]
for $k=0,1,\dots,n-1$. By the inductive hypothesis applied to $g$,
for  $z \in \Pi$,
\beq \label{n-1-pseudo-Taylor}
f'(z) =g(z)=
a^1  + 2 a^{2} (z-x) + \dots  + n a^n  (z-x)^{n-1} + \ont ((z-x)^{n-1}) \nn
\eeq
and hence
\beq \label{n-pseudo-Taylor} \nn
f'(z) - \left(
a^1  + 2 a^{2} (z-x) + \dots  + n a^n  (z-x)^{n-1} \right) = \beta(z)(z-x)^{n-1},
\eeq
for some function $\beta$ such that  $\beta(z)  \to 0$ as $z \nt x$.
We denote by $[x,z]$ the straight line segment joining $x$ and  $z$ in $\Pi$.
Now 
\begin{eqnarray}\label{0_derv}
&~&\frac{f(z) - a^0 - a^1 (z-x) - \dots - a^n (z-x)^{n} }{(z-x)^{n}} \nn \\
&=& \frac{1}{(z-x)^n} \; \int_{[x,z]} f'(\zeta) -  a^1 - 2 a^{2} (\zeta-x)- \dots  - a^n n (\zeta-x)^{n-1}   ~d \zeta  \nn \\
&=& \frac{1}{(z-x)} \; \int_{[x,z]}  \beta(\zeta) \left(\frac{\zeta-x}{z-x}\right)^{n-1}~d \zeta. 
\end{eqnarray}
The right hand side of (\ref{0_derv})  tends to $0$ as $z$ tends nontangentially to $x$.
Hence
\[
\lim_{z \nt x} \frac{f(z) -( a^0 + a^1 (z-x) + \dots  + a^n (z-x)^{n} )}{(z-x)^{n}}=0.
\]
The statement follows by induction.

(i) $\Rightarrow$ (ii)~ Let $K >0$ and consider the nontangential approach region at $x$
\[
S_K = \{ z \in \Pi: |z -x| \le K \;\dist (z, \R) \}.
\]
For $z$ in $S_K$ let $\gamma_z$ denote the circle with center $z$ and radius $\tfrac{1}{2}\dist (z, \R)$. It is clear that 
$\gamma_z$ lies in $\Pi$. Note that, for each $\zeta \in \gamma_z$,
we have $|\zeta -z|=\tfrac{1}{2} \dist (z, \R)$, and so
$$ \tfrac{1}{2}\;\dist (z, \R) \le \dist (\zeta, \R) \le \tfrac{3}{2}\;\dist (z, \R).$$
Thus, for each $\zeta \in \gamma_z$,
\begin{eqnarray} \label{gamma_z_to _x} \nn
|\zeta -x| & \le & |\zeta -z|+|z -x| = \tfrac{1}{2} \;\dist (z, \R) + |z -x| \le \tfrac{3}{2}|z -x|
\end{eqnarray}
and so
\begin{eqnarray} \label{on_gamma_z}
|\zeta -x| & \le & |\zeta -z|+|z -x| \le \tfrac{1}{2} \;\dist (z, \R) +  K \;\dist (z, \R) \\
 & \le &(2K +1) \; \dist (\zeta, \R) \nn.
\end{eqnarray}
Thus $\gamma_z$ lies in $S_{2K+1}$.

By equation (\ref{pseudo-Taylor}), for any $\zeta \in U$,
\beq \label{pseudo-Taylor-gamma} \nn
 f(\zeta) = a^0 + a^1 (\zeta-x) + \dots + a^{n-1} (\zeta-x)^{n-1} + a^n (\zeta-x)^{n} + \beta(\zeta)(\zeta-x)^{n},
\eeq
where $\beta(\zeta) \to 0$ as $\zeta \nt x$.
For $0 < \varepsilon <1$, define the number $\alpha (\varepsilon)$ by
$$
\alpha (\varepsilon) = \sup \left\{ \left| \beta(\zeta)
 \right|: \zeta \in S_{2K+1}, |\zeta -x| < \varepsilon  \right\}. $$
For each  $\zeta \in \gamma_z$ we have the estimate
\beq \label{beta}
|\beta(\zeta)| \le \alpha( |\zeta -x| ) \le  \alpha( \tfrac{3}{2}|z -x| ).
\eeq

By Cauchy's formula for derivatives,
\begin{eqnarray*}
f^{(n)}(z) &=&\frac{n!}{2 \pi \ii} \int_{\gamma_z} ~\frac{f(\zeta)}{(\zeta -z)^{n+1}} ~d \zeta\\
&=&\frac{n!}{2 \pi \ii} \int_{\gamma_z} ~\frac{
 a^0 + a^1 (\zeta -x) + \dots  + a^n (\zeta -x)^{n} + \beta(\zeta)(z-x)^{n}}{(\zeta -z)^{n+1}} ~d \zeta\\
&=&\frac{n!}{2 \pi \ii} \int_{\gamma_z} ~\frac{
 a^0}{(\zeta -z)^{n+1}} ~d \zeta + \frac{n!}{2 \pi \ii} \int_{\gamma_z}\frac{a^1 (\zeta -x)}{(\zeta -z)^{n+1}} ~d \zeta + \dots  \\
& ~ & ~~~ + \frac{n!}{2 \pi \ii} \int_{\gamma_z}\frac{a^n (\zeta -x)^{n}}{(\zeta -z)^{n+1}} ~d \zeta + \frac{n!}{2 \pi \ii} \int_{\gamma_z}\frac{\beta(\zeta)(\zeta-x)^{n}}{(\zeta -z)^{n+1}} ~d \zeta \\
&=& n! a^n +  \frac{n!}{2 \pi \ii} \int_{\gamma_z}\frac{\beta(\zeta)(\zeta-x)^{n}}{(\zeta -z)^{n+1}} ~d \zeta.\\
\end{eqnarray*}
By  (\ref{on_gamma_z})  and (\ref{beta}),
the integrand in the preceding integral is bounded in modulus by
\begin{eqnarray*}
 \left| \frac{\beta(\zeta)(\zeta-x)^{n}}{(\zeta -z)^{n+1}} \right|
& \le & \alpha( \tfrac{3}{2}|z -x|)\frac{((K +1) \;\dist (z, \R))^n}{(\tfrac{1}{2}\; \dist (z, \R))^{n+1}}\\
&= &\alpha( \tfrac{3}{2}|z -x|) \;2^{n+1} (K+1)^n \;(\dist (z, \R))^{-1}.
\end{eqnarray*}
for all  $\zeta \in \gamma_z$. The length of $\gamma_z$ is equal to $\pi \dist (z, \R)$, and so the integral itself is bounded in modulus by 
$\alpha( \tfrac{3}{2}|z -x|)\; 2^{n+1} \pi (K+1)^n $
which tends to $0$ as $z \to x$ in the region $S_K$. This proves that 
\[
\lim_{z \nt x}\frac{f^{(n)}(z)}{n!} = a^n. 
\]
\end{proof}

\section{Julia reduction and augmentation in the Pick class} \label{reduction}

In 1920 G. Julia  \cite{Ju20}, in the course of proving the well known ``Julia's Lemma" for bounded analytic functions on the disc, introduced a technique for passing from a function in the Pick class to a simpler one and back again. He showed that if $f\in\Pick$ is analytic at $x$ then the reduction of $f$ at $x$ also belongs to 
$\Pick$. Subsequently Nevanlinna \cite{Nev1} proved that 
the conclusion remains under a weaker hypothesis than analyticity at $x$. 
Whereas our earlier paper \cite{ALY10} needed only Julia's result, the present one depends crucially on Nevanlinna's (considerably more subtle) refinement.

We shall say that $ x \in \R$ is  a {\em $B$-point for} $f \in \Pick$ if the Carath\'eodory condition
\beq \label{cc}
\liminf_{z\to x} \frac{\im f(z)}{\im z} < \infty
\eeq
holds (\cite{AMcCY10}).  A part of the Carath\'eodory-Julia Theorem \cite{car54, S2} asserts that if $x\in\R$ is a $B$-point for $f\in\Pick$ then $f$ has a nontangential limit and an angular derivative at $x$.  We shall denote these quantities by $f(x), \ f'(x)$ respectively.  The theorem also tells us that $f'(x)>0$ if $f$ is not a constant function.

\begin{definition} \label{defreduce} \rm
 (1)  For any non-constant function $f\in\Pick$ and any $x\in\R$ such that $x$ is a $B$-point for $f$ we define the {\em reduction of $f$ at $x$} to be the function $g$ on $\Pi$ given by the equation
\beq \label{reducef}
g(z) = -\frac{1}{f(z)-f(x)} + \frac{1}{f'(x)(z-x)}.
\eeq

 (2)  For any $g\in\Pick$, any $x\in\R$  and any $a_0\in\R, a_1 > 0$, we define the {\em augmentation of $g$ at $x$ by $a_0, a_1$} to be the function $f$ on $\Pi$ given by
\beq \label{augmentg}
 \frac{1}{f(z)-a_0} =  \frac{1}{a_1(z-x)} -g(z).
\eeq
\end{definition}
Note that in (1), since $f(x)$ is real and $f$ is non-constant, the denominator $f(z)-f(x)$ is non-zero, by the maximum principle.
Furthermore $f$ defined by equation (\ref{augmentg}) is necessarily non-constant, for otherwise
\[
\im g(z) = \mathrm{const} + \frac{1}{a_1}\im \frac{1}{z-x},
\]
and the last term can be an arbitrarily large negative number for $z\in\Pi$, contrary to the choice of $g\in\Pick$.

Here are the crucial invariance properties of reduction and its inverse.
\begin{theorem} \label{propfg} 
Let $x \in\R$.
\begin{enumerate}
\item[\rm(1)] If  $x$ is a $B$-point for a non-constant function $f\in\Pick$ then the reduction $g$ of $f$ at $x$ also belongs to $\Pick$.
\item[\rm(2)] If $g\in\Pick$ and $a_0\in\R,\, a_1>0$ then the augmentation $f$ of $g$ at $x$ by $a_0,\, a_1$  belongs to $\Pick$, has a $B$-point at $x$  and satisfies $f(x)=a_0, \ f'(x) \leq a_1$.
Moreover
\beq\label{nopole}
 f'(x) = a_1  \quad\mbox{ if and only if }\quad \lim_{y\to 0+} yg(x+\ii y) =0.
\eeq
\end{enumerate}
\end{theorem}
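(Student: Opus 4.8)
The plan is to establish both parts by direct computations with imaginary parts, using the Schwarz--Pick inequality and the Carath\'{e}odory--Julia theorem quoted above. For part (1), note first that $g$ is analytic on $\Pi$: since $f$ is non-constant with $f(x)\in\R$, the maximum principle gives $f(z)\ne f(x)$ for $z\in\Pi$, and of course $z\ne x$ there, so both terms of \eqref{reducef} are analytic. The content is thus that $\im g\ge 0$, and a short computation gives
\[
\im g(z)=\frac{\im f(z)}{|f(z)-f(x)|^{2}}-\frac{\im z}{f'(x)\,|z-x|^{2}},
\]
so this is precisely Julia's inequality. I would obtain it from the Schwarz--Pick inequality $\bigl|\tfrac{f(z)-f(z_{0})}{f(z)-\overline{f(z_{0})}}\bigr|\le\bigl|\tfrac{z-z_{0}}{z-\bar z_{0}}\bigr|$ for $z,z_{0}\in\Pi$; subtracting the squares of both sides from $1$ and rearranging gives
\[
\frac{\im f(z)}{|f(z)-\overline{f(z_{0})}|^{2}}\ \ge\ \frac{\im z_{0}}{\im f(z_{0})}\cdot\frac{\im z}{|z-\bar z_{0}|^{2}},
\]
and then one lets $z_{0}\nt x$. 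By Carath\'{e}odory--Julia, $f(z_{0})\to f(x)\in\R$ nontangentially, and the existence of the angular derivative yields $\im f(z_{0})/\im z_{0}\to f'(x)>0$; the displayed inequality then passes in the limit to $\im f(z)/|f(z)-f(x)|^{2}\ge f'(x)^{-1}\,\im z/|z-x|^{2}$, that is, $\im g(z)\ge0$.

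For part (2), set $w(z)=\tfrac1{a_{1}(z-x)}-g(z)$, so that by \eqref{augmentg} we have $f=a_{0}+1/w$. Since $\im\tfrac1{a_{1}(z-x)}=-\tfrac{\im z}{a_{1}|z-x|^{2}}$ and $\im g\ge0$, we get $\im w(z)\le-\tfrac{\im z}{a_{1}|z-x|^{2}}<0$ throughout $\Pi$; hence $w$ is zero-free, $f$ is analytic, and $\im f=-\im w/|w|^{2}>0$, so $f\in\Pick$. Now restrict to the radial segment $z=x+\ii y$ and put $u=\im g(x+\ii y)\ge0$. One computes $-\im w(x+\ii y)=\tfrac1{a_{1}y}+u$, hence $\im f(x+\ii y)\le\bigl(\tfrac1{a_{1}y}+u\bigr)^{-1}$ and therefore $\tfrac{\im f(x+\ii y)}{y}\le\bigl(\tfrac1{a_{1}}+yu\bigr)^{-1}\le a_{1}$ for every $y>0$; in particular $\liminf_{z\to x}\im f(z)/\im z\le a_{1}<\infty$, so $x$ is a $B$-point for $f$. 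Moreover $|w(x+\ii y)|\ge\tfrac1{a_{1}y}\to\infty$ as $y\to0+$, so $f(x+\ii y)\to a_{0}$; since $f$ then has a nontangential (hence radial) limit at $x$ by Carath\'{e}odory--Julia, this forces $f(x)=a_{0}$.

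It remains to compute $f'(x)$ and verify \eqref{nopole}. From $f=a_{0}+1/w$ one has
\[
\frac{f(z)-a_{0}}{z-x}=\frac{1}{(z-x)w(z)}=\frac{1}{\tfrac1{a_{1}}-(z-x)g(z)}.
\]
At this point I would invoke the known fact, derived from the Nevanlinna representation of $g$, that $\lim_{z\nt x}(z-x)g(z)$ exists and equals $-m$, where $m\ge0$ is the mass placed at $x$ by the representing measure of $g$ --- equivalently $\lim_{y\to0+}y\,g(x+\ii y)=\ii m$. This is the only non-elementary input, and the step I expect to require the most care, since one must control the integral over $t\ne x$ in the representation by dominated convergence, the atom at $t=x$ being what contributes $m$. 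Granting it, the quotient above has nontangential limit $a_{1}/(1+a_{1}m)$; as $f(x)=a_{0}$, this limit is $f'(x)$, so $f'(x)=a_{1}/(1+a_{1}m)\le a_{1}$, with equality if and only if $m=0$, i.e.\ if and only if $\lim_{y\to0+}y\,g(x+\ii y)=0$, which is \eqref{nopole}.
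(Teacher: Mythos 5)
Your proposal is correct, but it takes a genuinely different route from the paper's. For part (1) the paper follows Nevanlinna: from the angular derivative one extracts a growth estimate $-\im g(z)\le(\varepsilon/a_1)|w|$ with $w=-1/(z-x)$, exponentiates, and applies the Phragm\'en--Lindel\"of theorem, with full details deferred to \cite[Theorem 5.4]{AMcCY10}; part (2) is then dismissed as an exercise on linear fractional maps, again citing \cite{AMcCY10}. You instead obtain (1) as the classical Julia inequality: the invariant Schwarz--Pick inequality at a base point $z_0$, rearranged via $1-\left|\tfrac{z-z_0}{z-\bar z_0}\right|^2=\tfrac{4\,\im z\,\im z_0}{|z-\bar z_0|^2}$, followed by letting $z_0\nt x$; the only boundary input is that $f(z_0)\to f(x)\in\R$ and $\im f(z_0)/\im z_0\to f'(x)$, and the latter along the radial approach is immediate from the existence of the angular derivative, since $\im f(x+\ii y)/y=\re\bigl((f(x+\ii y)-f(x))/(\ii y)\bigr)$ --- so you use only the portion of the Carath\'eodory--Julia theorem the paper already quotes, and avoid Phragm\'en--Lindel\"of entirely. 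For (2), your imaginary-part estimates giving $f\in\Pick$, the $B$-point property and $f(x)=a_0$ are exactly the computations the paper leaves to the reader; the one non-elementary ingredient is the identification $f'(x)=a_1/(1+a_1m)$ with $m=\mu(\{x\})$ via the Nevanlinna--Herglotz representation of $g$, which indeed gives $\lim_{y\to0+}yg(x+\ii y)=\ii m$ and $(z-x)g(z)\to-m$ nontangentially (the dominated-convergence step you flag does go through, e.g.\ by splitting the integral at $|t|\le 2(|x|+1)$ and using $|(z-x)/(t-z)|\le K$ in a Stolz angle). The trade-off is clear: the paper's route stays representation-free, in keeping with its stated elementary philosophy, but leans heavily on the companion paper; yours is self-contained, and in part (2) it yields the sharper quantitative formula $f'(x)=a_1/(1+a_1m)$, from which the criterion \eqref{nopole} is transparent.
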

\begin{proof}
Nevanlinna proved the analogue of (1) for the case that $x = \infty$, but his proof is easily modified for finite $x$; details are in 
\cite[Theorem 5.4]{AMcCY10}. Here is a bare outline. Let $a^1=f'(x)>0$. One shows that, for any $\varepsilon > 0$, 
\beq\label{expgro}
-\im g(z) \leq \frac{\varepsilon}{a_1} |w|
\eeq
for all $w \in \Pi$ of sufficiently large modulus, where
$w = -1/(z-x)$.
Introduce the analytic function $F$ on $\Pi$ by
\[
F(w) = \e^{\ii g(z)} = \e^{\ii g(x-1/w)}.
\]
We have, for any $w \in \Pi$,
\[
|F(w)| = \e^{\re \ii g(z)} = \e^{-\im g(z)}.
\]
By inequality \ref{expgro}, $F$ has only exponential growth on $\Pi$. Apply the Phragm\'en-Lindel\"of Theorem (e.g. \cite[p. 218]{BakNew}) to show that  $|F| \leq \e^{\de/a_1}$ on $\Pi+\ii\de$, for any $\delta > 0$.  On letting $\de$ tend to zero we deduce that $|F| \leq 1$ on $\Pi$, and hence that $\im g \geq 0$ on $\Pi$. Thus $g\in\Pick$.

The proof of (2) is an exercise in the mapping properties of linear fractional transformations of the complex plane. Again, details are in \cite{AMcCY10}. 
\end{proof}

\begin{remark} {\rm Let $g$ be a real rational function of degree $m$ and let $f$ be 
the augmentation of $g$ at $x$ by $a^0, a^1>0$. Then $f$ is a real rational function of degree $m+1$.}
\end{remark}

Here, as usual, the degree of a rational function $f = \frac{p}{q}$ is defined to be the maximum of the degrees of $p$ and $q$, where $p$, $q$ are polynomials in their lowest terms.

Pseudo-Taylor expansions behave well with respect to reduction and augmentation, as we now show.
\begin{proposition}\label{2augment_weak} {\rm (1)} Let $g,G  \in \Pick$ and $x\in\R$.  Suppose that  $G$ is analytic at $x$ and that, for some non-negative integer $N$,
\[
g(z) - G(z) = \ont((z-x)^N) \qquad \mbox{ as } z \to x. 
\]
Then the augmentations $f, F$ of  $g$, $G$ respectively at $x$ by $a^0 \in \R$ and $a^1 >0$ satisfy 
\beq\label{fF}
f(z) -F(z) = (g(z) - G(z))(F(z)-a^0)(f(z)-a^0)
\eeq
and $f(z) -F(z) = \ont((z-x)^{N+2})$ as $z \to x$.

{\rm (2)} Let $f\in\Pick$ be a non-constant function, let $x\in\R$ be a $B$-point for $f$ and let $F$ be a polynomial such that 
\beq\label{fFoN}
f(z) -F(z) = \ont((z-x)^{N})
\eeq
for some $N \ge 2$. Let $g$, $G$ be the reductions of $f, F$ respectively at $x$. Then $F'(x) >0$ and 
\beq\label{simpleid}
f(z) -F(z) = (g(z) - G(z))(F(z)-F(x))(f(z)-F(x))
\eeq
for all $z \in \Pi$. Moreover, $g$ has a pseudo-Taylor expansion at $x$ of order $N-2$, and
\beq\label{gGo(N-2)}
g(z) -G(z) = \ont((z-x)^{N-2}).
\eeq
\end{proposition}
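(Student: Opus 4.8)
The plan is to obtain both displayed identities as purely algebraic consequences of the definitions of reduction and augmentation, and then to track orders of vanishing as $z\nt x$. \emph{Part (1).} I would begin with the two instances of the augmentation formula (\ref{augmentg}),
\[
\frac{1}{f(z)-a^0}=\frac{1}{a^1(z-x)}-g(z),\qquad \frac{1}{F(z)-a^0}=\frac{1}{a^1(z-x)}-G(z),
\]
and subtract them, so that the singular terms $\tfrac{1}{a^1(z-x)}$ cancel and we get $\tfrac{1}{f(z)-a^0}-\tfrac{1}{F(z)-a^0}=G(z)-g(z)$; clearing denominators then gives exactly (\ref{fF}). For the order estimate, note that $F$ is analytic at $x$ with $F(x)=a^0$: solving (\ref{augmentg}) for $F-a^0$ gives $F(z)-a^0=\frac{a^1(z-x)}{1-a^1(z-x)G(z)}$, which is analytic near $x$ since $G$ is and the denominator equals $1$ at $x$; hence $F(z)-a^0=O(z-x)$. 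Also, by Theorem \ref{propfg}(2), $x$ is a $B$-point for $f$ with $f(x)=a^0$, so $f$ has an angular derivative at $x$ and $f(z)-a^0=f(z)-f(x)=O(z-x)$ as $z\nt x$. Feeding these two bounds and the hypothesis $g-G=\ont((z-x)^{N})$ into (\ref{fF}) gives $f(z)-F(z)=\ont((z-x)^{N})\cdot O(z-x)\cdot O(z-x)=\ont((z-x)^{N+2})$.

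\emph{Part (2).} The first step is to identify the boundary data. Since $N\ge 2$, (\ref{fFoN}) forces $f(z)-F(z)\to 0$ and $(f(z)-F(z))/(z-x)\to 0$ as $z\nt x$; and since $F$ is a polynomial, $F(z)\to F(x)$ and $(F(z)-F(x))/(z-x)\to F'(x)$, so $f$ has nontangential limit $f(x)=F(x)$ and, dividing (\ref{fFoN}) by $z-x$ and adding, angular derivative $f'(x)=F'(x)$. The Carath\'{e}odory--Julia theorem then gives $f'(x)>0$ because $f$ is non-constant, hence $F'(x)>0$; in particular $F$ is non-constant, so its reduction $G$ at $x$ is well defined, and writing $F(z)-F(x)=(z-x)Q(z)$ with $Q$ a polynomial and $Q(x)=F'(x)\ne 0$ one finds $G(z)=\frac{Q(z)-Q(x)}{(z-x)F'(x)Q(z)}$, which is analytic at $x$. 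Now, since $f(x)=F(x)$ and $f'(x)=F'(x)$, the reduction formulae read
\[
g(z)=-\frac{1}{f(z)-F(x)}+\frac{1}{F'(x)(z-x)},\qquad G(z)=-\frac{1}{F(z)-F(x)}+\frac{1}{F'(x)(z-x)},
\]
and subtracting gives $g(z)-G(z)=\frac{f(z)-F(z)}{(f(z)-F(x))(F(z)-F(x))}$, which rearranges to (\ref{simpleid}).

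For the order of $g-G$, I would use that $(F(z)-F(x))/(z-x)\to F'(x)>0$ and $(f(z)-F(x))/(z-x)=(f(z)-f(x))/(z-x)\to f'(x)=F'(x)>0$ as $z\nt x$, so on a nontangential approach region near $x$ each of $|F(z)-F(x)|^{-1}$ and $|f(z)-F(x)|^{-1}$ is $O(|z-x|^{-1})$; combined with $f-F=\ont((z-x)^{N})$, the identity (\ref{simpleid}) then yields $g(z)-G(z)=\ont((z-x)^{N})\cdot O((z-x)^{-1})\cdot O((z-x)^{-1})=\ont((z-x)^{N-2})$, which is (\ref{gGo(N-2)}). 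Finally, since $G$ is analytic at $x$ it has a Taylor expansion $G(z)=\sum_{j=0}^{N-2}G_{j}(z-x)^{j}+O((z-x)^{N-1})$ of order $N-2$ about $x$, and adding $g-G=\ont((z-x)^{N-2})$ gives $g(z)=\sum_{j=0}^{N-2}G_{j}(z-x)^{j}+\ont((z-x)^{N-2})$, so $g$ has a pseudo-Taylor expansion of order $N-2$ at $x$.

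I expect the routine parts to be the linear-fractional algebra and the elementary asymptotics, and the one step that genuinely exploits the hypotheses to be the ``backward'' estimate in Part (2): dividing (\ref{simpleid}) by $(F(z)-F(x))(f(z)-F(x))$ costs two full powers of $(z-x)$, and this is legitimate only because each of these factors is bounded \emph{below} by a positive multiple of $|z-x|$ near $x$ --- which is exactly the content of $f'(x)=F'(x)>0$, i.e.\ of the $B$-point hypothesis together with the Carath\'{e}odory--Julia theorem. The complementary point to keep straight is the direction of the shift in order: Part (1) gains two orders because $F-a^0$ and $f-a^0$ each vanish to first order at $x$, whereas Part (2) loses two orders for the dual reason.
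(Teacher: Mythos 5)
Your proof is correct and follows essentially the same route as the paper: in each part you subtract the two defining linear-fractional relations (augmentation, resp.\ reduction) to obtain the identities (\ref{fF}) and (\ref{simpleid}), and then convert the factors $f-a^0$, $F-a^0$ (resp.\ the reciprocals of $f-F(x)$, $F-F(x)$) into powers of $z-x$ nontangentially via $f(x)=F(x)$ and $f'(x)=F'(x)>0$ from the Carath\'eodory--Julia theorem, finishing with the analyticity of $G$ at $x$ to produce the pseudo-Taylor expansion of $g$. Your explicit check that $G(z)=\frac{Q(z)-Q(x)}{(z-x)F'(x)Q(z)}$ is analytic at $x$, and your lower-bound justification for dividing by $(F(z)-F(x))(f(z)-F(x))$, merely spell out details the paper asserts, so the two arguments coincide.
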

\begin{proof} (1) Note that
\begin{eqnarray*}
g(z) - G(z) &=& -\frac{1}{f(z)-a^0} + \frac{1}{a^1 (z-x)} - \left(-\frac{1}{F(z)-a^0} + \frac{1}{a^1 (z-x)} \right)\\
&=& \frac{f(z) -F(z)}{(F(z)-a^0)(f(z)-a^0)}.
\end{eqnarray*}
Therefore
\begin{eqnarray*}
f(z) -F(z)  &=& (g(z) - G(z))(F(z)-a^0)(f(z)-a^0)\\
 &=& \ont((z-x)^N)\ont((z-x))\ont((z-x))= \ont((z-x)^{N+2})
\end{eqnarray*}
 as $z \to x$.\\

(2)  By the  Carath\'eodory-Julia Theorem, the nontangential limit and angular derivative
$ f(x)\in\R$ and $f'(x)>0$ exist.  By equation (\ref{fFoN}), $F(x)=f(x)$ and $F'(x) =  f'(x).$  Hence
\[
F(z)= f(x) + f'(x)(z-x) +o(z-x).
\]
The identity (\ref{simpleid}) is immediate as in Part (1), and we have
\begin{eqnarray*}
g(z) - G(z) &=& \frac{f(z) -F(z)}{(F(z)-f(x))(f(z)-f(x))}\\
&=& \frac{\ont((z-x)^{N})}{[f'(x) (z-x) + \ont(z-x)]^2}\\
&=& \ont((z-x)^{N-2}).
\end{eqnarray*}
Since $G$ is the reduction of a polynomial $F$, it is rational and is analytic at $x$. Thus it has an infinite Taylor expansion about $x$, and so $g$ has a pseudo-Taylor expansion of order $N-2$ about $x$.
\end{proof}

\begin{corollary}\label{fg_C-point_N} Let $x \in \R$, let $f\in\Pick$ be non-constant function and let $g$ be the reduction of $f$ at $x$. Let $N > 2$. Then $f$ has a pseudo-Taylor expansion of order $N$ about $x$  if and only if $g$ has a pseudo-Taylor expansion of order $N-2$ about $x$.
\end{corollary}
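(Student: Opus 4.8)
The plan is to obtain both implications from Proposition~\ref{2augment_weak}. First note that, since $g$ is the reduction of $f$ at $x$, the point $x$ is a $B$-point for the non-constant function $f$, so the nontangential limit $f(x)\in\R$ and the angular derivative $f'(x)>0$ exist by the Carath\'eodory--Julia Theorem, and rearranging the defining equation~(\ref{reducef}) exhibits $f$ as the augmentation of $g$ at $x$ by $f(x),f'(x)$; concretely,
\[
f(z) \;=\; f(x) + \frac{f'(x)(z-x)}{1 - f'(x)(z-x)\,g(z)}, \qquad z\in\Pi .
\]

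For the forward implication, suppose $f$ has a pseudo-Taylor expansion of order $N$ about $x$. By definition there is a polynomial $F$ with $f(z)-F(z)=\ont((z-x)^N)$, and since $N>2$ we have $N\ge2$, so Proposition~\ref{2augment_weak}(2) applies and its conclusion is precisely that $g$, the reduction of $f$ at $x$, has a pseudo-Taylor expansion of order $N-2$ about $x$. Nothing more is needed here.

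For the converse, suppose $g$ has a pseudo-Taylor expansion of order $N-2$ about $x$ and put $v(z)=f'(x)(z-x)\,g(z)$. Since $g(z)\to g(x)$ nontangentially, $v(z)\to0$ as $z\nt x$, and multiplying the expansion of $g$ by $f'(x)(z-x)$ shows $v$ has a pseudo-Taylor expansion of order $N-1$ about $x$ with vanishing constant term; in particular $v(z)=O(z-x)$ as $z\nt x$. The finite geometric identity
\[
\frac{1}{1-v(z)} \;=\; \sum_{k=0}^{N-1} v(z)^k \;+\; \frac{v(z)^{N}}{1-v(z)}
\]
then exhibits $1/(1-v(z))$ as a sum of the powers $v^k$, each having a pseudo-Taylor expansion of order $N-1$ (pseudo-Taylor expansions of a fixed order being closed under products and sums), plus $v(z)^{N}/(1-v(z))$, which is $\ont((z-x)^{N-1})$ since $v(z)^{N}=O((z-x)^{N})$ and $1/(1-v(z))$ is bounded near $x$. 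Hence $1/(1-v)$ has a pseudo-Taylor expansion of order $N-1$, and then the displayed formula for $f$ --- namely $f(z)=f(x)+f'(x)(z-x)(1-v(z))^{-1}$ --- shows $f$ has a pseudo-Taylor expansion of order $N$ about $x$, since multiplication by $z-x$ raises the order by one. This completes the proof.

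The only step that is not routine bookkeeping with the $\ont$-relation is the closure of pseudo-Taylor expansions of a fixed order $M$ under products, which I would state and verify as a short lemma: writing $h_i=P_i+R_i$ with $P_i$ a polynomial of degree at most $M$ and $R_i(z)/(z-x)^M\to0$ nontangentially, one has that $P_i$, being continuous, is bounded near $x$, that $R_1,R_2\to0$ there, and that the part of $P_1P_2$ of degree exceeding $M$ is $O((z-x)^{M+1})$, so $h_1h_2$ differs from the degree-$M$ truncation of $P_1P_2$ by $\ont((z-x)^M)$; the case of $1/(1-v)$ then follows from the geometric identity above together with the crude bound on its tail. I do not anticipate a real obstacle. (Alternatively, one could recast the converse via Theorem~\ref{Taylor} in terms of nontangential limits of the derivatives $f^{(k)}$ and differentiate the formula for $f$, but the direct computation is cleaner.)
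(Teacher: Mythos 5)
Your proof is correct. The forward implication is exactly the paper's route: the paper proves the corollary by citing Proposition~\ref{2augment_weak}, and part (2) of that proposition, applied to the polynomial part of the order-$N$ expansion of $f$, is precisely your argument. For the converse, however, you depart from the paper: instead of invoking Proposition~\ref{2augment_weak}(1) (with $G$ the polynomial part of the expansion of $g$ and $F$ its augmentation, using the identity $f-F=(g-G)(F-a^0)(f-a^0)$), you write $f$ explicitly as the augmentation $f(z)=f(x)+f'(x)(z-x)\bigl(1-v(z)\bigr)^{-1}$ with $v=f'(x)(z-x)g$, expand $\bigl(1-v\bigr)^{-1}$ by a finite geometric series, and rely on closure of pseudo-Taylor expansions of a fixed order under sums and products, plus a crude bound on the tail $v^N/(1-v)$. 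All the estimates you sketch (boundedness of $g$ and of $(1-v)^{-1}$ on nontangential approach regions, $v=O(z-x)$, the product lemma, and the fact that multiplication by $(z-x)$ raises the order by one) are sound, so the argument goes through. What your route buys: it is self-contained and, incidentally, sidesteps a small wrinkle in a literal appeal to Proposition~\ref{2augment_weak}(1), whose statement assumes $G\in\Pick$ --- the truncated expansion of $g$ is generally not a Pick function when its degree exceeds one, so the paper's one-line deduction implicitly uses that the proof of part (1) never needs that hypothesis. What it costs is the extra bookkeeping lemma on products of pseudo-Taylor expansions, which the paper's identity-based argument avoids.
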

\begin{proof}
It follows from Proposition \ref{2augment_weak}.
\end{proof}

\begin{lemma}\label{2augment_relax_weak} Let  $x\in \R$ and 
let $f\in\Pick$ be a non-constant function. Suppose $f$ has a pseudo-Taylor expansion of order $N \ge 2$ at $x$ 
 and let $F$ be a polynomial such that 
\beq\label{fFoN_relax}
f(z) -F(z) = A (z-x)^N + \ont((z-x)^{N})
\eeq
for some $A \in \C$. Let $g$, $G$ be the reductions of $f, F$ respectively at $x$. Then $F'(x) >0$ and 
\beq\label{gG_relax}
g(z) -G(z) = \frac{A}{F'(x)^2} (z-x)^{N-2} + \ont((z-x)^{N-2}).
\eeq
\end{lemma}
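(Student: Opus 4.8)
The plan is to imitate the proof of Proposition~\ref{2augment_weak}(2), but to track the leading term of $g-G$ rather than merely its order. The first step is to identify the low-order Taylor coefficients of the polynomial $F$. Since $f$ has a pseudo-Taylor expansion of order $N\ge 2$ it has one of order $1$, so $f$ has a nontangential limit $f(x)\in\R$ and a finite angular derivative $f'(x)$ at $x$, and $x$ is consequently a $B$-point for $f$ (this is part of the Carath\'eodory--Julia theorem, and in any case is implicit in the reference to the reduction $g$ in the statement); since $f$ is non-constant, $f'(x)>0$, as noted in Section~\ref{reduction}. Write $f(z)=\sum_{j=0}^{N}c^j(z-x)^j+\ont((z-x)^N)$ for the pseudo-Taylor expansion of $f$ of order $N$, so that $c^0=f(x)$ and $c^1=f'(x)$. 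Comparing this with the ordinary Taylor expansion of $F$ about $x$ and with (\ref{fFoN_relax}), and using the uniqueness of asymptotic expansions along a segment that approaches $x$ nontangentially, one gets $F^{(j)}(x)/j!=c^j$ for $j=0,\dots,N-1$ and $F^{(N)}(x)/N!=c^N-A$. In particular $F(x)=f(x)$ and $F'(x)=f'(x)>0$; this is the first assertion of the lemma, and it guarantees that the reduction $G$ of $F$ at $x$ is also well defined.

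Next I would record the identity, obtained exactly as in Proposition~\ref{2augment_weak}(2). Since $F(x)=f(x)$ and $F'(x)=f'(x)$, subtracting the defining formulas (\ref{reducef}) for $g$ and $G$ makes the two $\tfrac{1}{(z-x)}$ terms cancel, so that for $z\in\Pi$
\[
g(z)-G(z)=\frac{1}{F(z)-F(x)}-\frac{1}{f(z)-f(x)}=\frac{f(z)-F(z)}{\bigl(F(z)-F(x)\bigr)\bigl(f(z)-f(x)\bigr)},
\]
which is just (\ref{simpleid}) rearranged. Finally I would substitute the asymptotics. The numerator equals $A(z-x)^N+\ont((z-x)^N)$ by (\ref{fFoN_relax}); by analyticity of $F$ one has $F(z)-F(x)=F'(x)(z-x)+O((z-x)^2)$, and $f(z)-f(x)=f'(x)(z-x)+\ont((z-x))$ from the order-$1$ part of the expansion, so, using $f'(x)=F'(x)$,
\[
\bigl(F(z)-F(x)\bigr)\bigl(f(z)-f(x)\bigr)=F'(x)^2(z-x)^2\bigl(1+\ont(1)\bigr),
\]
where the factor $1+\ont(1)$ is bounded away from $0$ as $z\nt x$. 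Dividing and collecting the $\ont$ terms then gives (\ref{gG_relax}).

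The calculation is essentially routine; the one place that needs a little care is the bookkeeping of the $\ont$ remainders in the last step, namely the claim that $\ont((z-x)^N)$ divided by $F'(x)^2(z-x)^2\bigl(1+\ont(1)\bigr)$ is again $\ont((z-x)^{N-2})$. This rests on the factor $1+\ont(1)$ staying away from $0$ near $x$ and on the elementary facts that products such as $\ont((z-x)^k)\cdot\ont(1)$ and $\ont((z-x)^k)\cdot O(1)$ are once more $\ont((z-x)^k)$. The only real hypothesis-chasing is ensuring $F'(x)>0$ (so that $F'(x)^2\ne 0$ and $G$ exists), and that was settled in the first step through $F'(x)=f'(x)$ and the non-constancy of $f$.
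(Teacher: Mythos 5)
Your proposal is correct and follows essentially the same route as the paper: identify $F(x)=f(x)$, $F'(x)=f'(x)>0$ from the order-$1$ part of the pseudo-Taylor expansion (with the $B$-point property implicit in the existence of the reduction, exactly as in Proposition~\ref{2augment_weak}), invoke the identity (\ref{simpleid}), and divide the asymptotics of numerator and denominator. The extra bookkeeping you supply (the full coefficient comparison for $F$ and the $1+\ont(1)$ factor) is just a more explicit version of the paper's terse computation.
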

\begin{proof} As in Proposition \ref{2augment_weak}, $F(x)=f(x)$ and $F'(x)=f'(x) >0$. The
equation (\ref{simpleid}) implies
\begin{eqnarray*}
g(z) - G(z) &=& \frac{f(z) -F(z)}{(F(z)-F(x))(f(z)-f(x))}\\
&=& \frac{A (z-x)^N + \ont((z-x)^{N})}{[F'(x) (z-x) + \ont((z-x))]^2}\\
&=& \frac{A}{F'(x)^2} (z-x)^{N-2} +\ont((z-x)^{N-2}).
\end{eqnarray*}
The relation (\ref{gG_relax}) follows. 
\end{proof}

Another ingredient of the proof of our main result is an identity for Hankel matrices, which shows that the reduction of power series corresponds to Schur complementation of Hankel matrices.

\begin{theorem} \label{congruent}
 Let
\[
f=\sum_{j=0}^\infty f_j z^j, \qquad g=\sum_{j=0}^\infty g_j z^j
\]
 be formal power series over $\C$ with $f_1\neq 0$ and $f_0,f_1,\dots,f_n \in\R$, and let  $g$ be the reduction of $f$ at $0$.  Then the $n\times n$ Hankel matrix 
$$
H_n(g)=[g_{i+j-1}]_{i,j=1}^n
$$ 
is congruent to the Schur complement of the $(1,1)$ entry in the $(n+1)\times(n+1)$ Hankel matrix 
$$
H_{n+1}(f)=[f_{i+j-1}]_{i,j=1}^{n+1}.
$$ 
 Consequently $H_{n+1}(f)> 0$ if and only if $f_1>0$ and $H_n(g)>0$.
\end{theorem}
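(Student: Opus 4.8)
The plan is to read the congruence off the power-series identity that defines the reduction, and then to deduce the positivity assertion from Sylvester's law of inertia together with the block factorisation of $H_{n+1}(f)$ along its $(1,1)$ entry. Since $f(0)=f_0$ and $f'(0)=f_1$, Definition~\ref{defreduce}(1) gives
\[
g(z)=\frac{1}{f_1 z}-\frac{1}{f(z)-f_0}.
\]
Put $\phi(z)=\bigl(f(z)-f_0\bigr)/z=\sum_{k\ge 0}f_{k+1}z^k$; this is a unit of $\C[[z]]$ because $f_1\ne 0$, so $1/\phi(z)=\sum_{k\ge 0}e_k z^k$ with $e_0=1/f_1$. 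Substituting $f(z)-f_0=z\phi(z)$ and using $e_0=1/f_1$, the identity for $g$ collapses to
\[
g(z)=\frac1z\Bigl(\frac1{f_1}-\frac1{\phi(z)}\Bigr)=-\sum_{m\ge 0}e_{m+1}z^m,\qquad\text{hence}\qquad g_{i+j-1}=-e_{i+j}\quad(i,j\ge1).
\]
Thus $H_n(g)=-[e_{i+j}]_{i,j=1}^{n}$ is a shifted Hankel matrix of $1/\phi$, while $H_{n+1}(f)$ is the Hankel matrix of $\phi$. I would also record the factorisation
\[
H_{n+1}(f)=\begin{pmatrix}1&0\\ f_1^{-1}r& I_n\end{pmatrix}\begin{pmatrix}f_1&0\\ 0& S\end{pmatrix}\begin{pmatrix}1& f_1^{-1}r^{\mathrm T}\\ 0& I_n\end{pmatrix},\qquad r=(f_2,\dots,f_{n+1})^{\mathrm T},
\]
where $S=[\,f_{i+j-1}\,]_{i,j=2}^{n+1}-f_1^{-1}rr^{\mathrm T}$ is the Schur complement of the $(1,1)$ entry. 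Granting the congruence $S\cong H_n(g)$, Sylvester's law gives $S>0\iff H_n(g)>0$, and the factorisation gives $H_{n+1}(f)>0\iff f_1>0$ and $S>0$; so both assertions of the theorem follow.

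To prove $S\cong H_n(g)$ I would pass to quadratic forms on the $n$-dimensional space $W$ of polynomials $U(z)=\sum_{i=1}^{n}c_i z^{i}$. Working inside the field $\C((z^{-1}))$ of formal Laurent series in $z^{-1}$, so that $\phi(1/z)=f_1+f_2z^{-1}+\cdots$ and $1/\phi(1/z)=\sum_{k\ge0}e_k z^{-k}$ are legitimate, the operation $[z^0]$ of taking the coefficient of $z^0$ converts Hankel forms into residue pairings; one checks (after the evident matching of coordinates) that
\[
Q_S(U)=[z^0]\bigl(U^2\phi(1/z)\bigr)-f_1^{-1}\bigl([z^0](U\phi(1/z))\bigr)^2
\]
is the quadratic form of $S$, while $Q_g(U)=-[z^0]\bigl(U^2/\phi(1/z)\bigr)$ is that of $H_n(g)$. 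I would then introduce $\Phi\colon W\to W$, $\Phi(U)=\pi_+\bigl(U(z)/\phi(1/z)\bigr)$, where $\pi_+$ projects $\C((z^{-1}))$ onto the span of $z,\dots,z^n$ (observe $U/\phi(1/z)$ has no power of $z$ exceeding $z^n$). Since $1/\phi(1/z)=f_1^{-1}+O(z^{-1})$, the matrix of $\Phi$ in the basis $z,\dots,z^n$ is upper triangular with all diagonal entries $f_1^{-1}\ne0$ — explicitly it is $[e_{b-a}]_{1\le a\le b\le n}$ — hence invertible. The goal is the identity $Q_S(\Phi U)=Q_g(U)$, which is equivalent to $H_n(g)=\Phi^{\mathrm T}S\,\Phi$, the desired congruence (a real congruence when the $f_j$ are real).

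The verification of $Q_S(\Phi U)=Q_g(U)$ is the one genuinely computational step, and I expect it to be the main obstacle to present cleanly. Write $U/\phi(1/z)=\Phi(U)+L$ with $L=\pi_-\bigl(U/\phi(1/z)\bigr)\in\C[[z^{-1}]]$ ($\pi_-$ the complementary projection), so that $L$ has constant term $\ell_0=[z^0]\bigl(U/\phi(1/z)\bigr)$. Substituting $\Phi(U)=U/\phi(1/z)-L$ into $Q_S(\Phi U)$ and expanding, the bookkeeping of constant terms gives $[z^0]\bigl(\Phi(U)\phi(1/z)\bigr)=-f_1\ell_0$ and $[z^0]\bigl(L^2\phi(1/z)\bigr)=f_1\ell_0^2$, so the two $\ell_0^2$-terms cancel, leaving
\[
Q_S(\Phi U)=[z^0]\bigl(U^2/\phi(1/z)\bigr)-2[z^0]\bigl(UL\bigr).
\]
Finally one notes that both $[z^0]\bigl(U^2/\phi(1/z)\bigr)$ and $[z^0](UL)$ equal $\sum_{i,j\ge1}c_ic_j e_{i+j}=-Q_g(U)$, so $Q_S(\Phi U)=-Q_g(U)-2\bigl(-Q_g(U)\bigr)=Q_g(U)$, as required. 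Apart from this cancellation the argument is routine formal-series manipulation, and the positivity statement then follows as in the first paragraph.
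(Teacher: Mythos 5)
Your proof is correct, and there is nothing in this paper to compare it with: the paper does not prove Theorem \ref{congruent} but simply quotes it as Corollary 3.4 of \cite{ALY10}, so your argument supplies a self-contained proof where the text gives only a citation. I checked the steps: from $g(z)=\tfrac{1}{f_1z}-\tfrac{1}{z\phi(z)}$ one indeed gets $g_m=-e_{m+1}$, so $H_n(g)=-[e_{i+j}]$; the form $Q_S(U)=[z^0]\bigl(U^2\phi(1/z)\bigr)-f_1^{-1}\bigl([z^0](U\phi(1/z))\bigr)^2$ equals $\sum_{i,j}c_ic_j\bigl(f_{i+j+1}-f_1^{-1}f_{i+1}f_{j+1}\bigr)$, which is exactly the Schur complement form; your $\Phi$ has matrix $[e_{b-a}]$, upper triangular with diagonal $1/f_1$, hence invertible; and the cancellation $[z^0]\bigl(L^2\phi(1/z)\bigr)=f_1\ell_0^2=f_1^{-1}\bigl([z^0](\Phi(U)\phi(1/z))\bigr)^2$ together with $[z^0]\bigl(U^2/\phi(1/z)\bigr)=[z^0](UL)=\sum_{i,j}c_ic_je_{i+j}$ yields $Q_S(\Phi U)=Q_g(U)$, hence $H_n(g)=\Phi^{\mathrm T}S\,\Phi$ by polarization, both matrices being symmetric. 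Two features of your version are worth noting: the congruence matrix $\Phi$ involves only $e_0,\dots,e_{n-1}$, which depend on $f_1,\dots,f_n$ alone, so it is real under precisely the stated hypothesis $f_0,\dots,f_n\in\R$; and the block factorization of $H_{n+1}(f)$ along the $(1,1)$ entry, plus Sylvester's law, correctly delivers the ``consequently'' clause (positivity of $H_{n+1}(f)$ of course presupposes that matrix is real symmetric, which is implicit in the statement). In substance this is the classical mechanism the cited corollary rests on --- reduction of the power series corresponds to Schur complementation of the Hankel form, implemented by a triangular Toeplitz congruence built from $1/\phi$ --- and your residue-pairing formulation in $\C((z^{-1}))$ is a clean way to organize what is presumably done in \cite{ALY10} by direct matrix manipulation.
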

This is  Corollary 3.4  of \cite{ALY10}.  It is convenient to introduce some notation for the relationship described in the theorem.  For any $n \times n$ matrix $A = [a_{ij}]$ with $a_{11} \neq 0$ we define
$\schur A$ to be the Schur complement of $[a_{11}]$ in $A$. Thus, for $f$ and $g$ as in Theorem \ref{congruent}, $\schur H_{n+1}(f)$ is congruent to $H_n(g)$.

\section{A relaxation of the boundary Carath\'{e}odory-Fej\'{e}r problem} \label{relax}

Solvability of Problem $\partial CF\Pick(\R)$ is best approached through a slight relaxation of the problem, in which the final interpolation condition ($f^{(n)}(x)/n!=a^n$) is replaced by an inequality (see for example \cite{Geo98,BD,ALY10}).  The reason is that solvability of the relaxed problem, not the original one, corresponds to positivity of a Hankel matrix.  We therefore consider:

\noindent {\bf Problem $\partial CF\Pick'(\R)$} \quad {\em
Given a point $x\in \R$ and $a^0,a^1,\dots,a^n \in \R$, find a function $f$ in the Pick class such that $f$ is analytic at  $x$,
\beq \label{interpcondRel}
\frac{f^{(k)}(x)}{k!} = a^k, \qquad k=0,1,\dots,n-1, \;\;{\text and } \;\;  \frac{f^{(n)}(x)}{n!} \le a^n.\\
\eeq
}

The terminology for the problem was introduced in \cite{ALY10}, but in this paper we are interested in functions $f$ that satisfy the interpolation conditions in a weak sense.
We define a function $f \in \Pick$ to be a {\em weak solution} of Problem $\partial CF\Pick'(\R)$ if $f$ has a pseudo-Taylor expansion of order $n$ at $x$ and the $j$th pseudo-Taylor coefficient of $f$ at $x$ is $a^j$ for $j=0,1,\dots,n-1$ and is no greater than $a^n$ for $j=n$.  

Here is an alternative description of weak solutions. Suppose that $F$ is analytic at $x$ and
\[
F(z) = a^0 + a^1 (z-x) + \dots + a^{n} (z-x)^{n} + O((z-x)^{n+1}).
\]
Then a function $f \in \Pick$ is a weak solution of 
 Problem $\partial CF\Pick'(\R)$ if and only if, for some $A \le 0$,
\beq \label{fFAo}
f(z) -F(z) = A(z-x)^n + \ont ((z-x)^{n}).
\eeq

We say a function $f \in \Pick$ is a {\em nontangential solution} of Problem $\partial CF\Pick'(\R)$ if 
\beq \label{quasi-solution}
\lim_{z \nt x}\frac{f^{(k)}(z)}{k!} = a^k, \qquad k=0,1,\dots,n-1, \;\;\text{ and } \;\;  \lim_{z \nt x} \frac{f^{(n)}(z)}{n!} \le a^n,
\eeq
and we define a {\em radial solution} of Problem $\partial CF\Pick'(\R)$ in the obvious way.

We might expect that more problems $\partial CF\Pick'$ would admit weak solutions than true solutions. In fact, though, the crux of the problem is the analytic case. This assertion is justified by the following result.

Corresponding to the sequence $a=(a^0, a^1, \dots, a^n)$  and any positive integer $m$ such that $2m-1 \leq n$ we define the Hankel matrix $H_m(a)$ to be the $m\times m$ matrix
$[a_{i+j-1}]_{i,j=1}^m$.  If $F$ is a function analytic at the interpolation node $x$, we shall write $H_m(F)$ to mean $H_m(f_0, \dots, f_{2m-1})$, where $f_j$ is the $j$th Taylor coefficient of $F$ at $x$.

\begin{theorem} \label{weakequiv} Let $n$ be an odd positive integer.
Then the following  statements are equivalent:
\begin{enumerate}
\item[\rm(1)]  Problem $\partial CF\Pick'(\R)$ has a weak solution;
\item[\rm(2)]  Problem $\partial CF\Pick'(\R)$ has a solution which is analytic at $x$;
\item[\rm(3)]  Problem $\partial CF\Pick'(\R)$ has a rational solution;
\item[\rm(4)]  Problem $\partial CF\Pick'(\R)$ has a real rational solution;
\item[\rm(5)]  Problem $\partial CF\Pick'(\R)$ has a nontangential solution;
\item[\rm(6)]  Problem $\partial CF\Pick'(\R)$ has a radial solution;
\item[\rm(7)]  The Hankel matrix $H_m(a)$ is positive, where $m = \tfrac 12 (n+1)$.
\end{enumerate} 
\end{theorem}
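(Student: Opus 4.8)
The plan is to argue by induction on $m=\tfrac12(n+1)$, proving at each stage the cycle of implications $(7)\Rightarrow(4)\Rightarrow(3)\Rightarrow(2)\Rightarrow(5)\Rightarrow(6)\Rightarrow(1)\Rightarrow(7)$; since $n-2$ is again a positive odd integer, the induction stays within odd $n$. The base case $m=1$ is elementary: Problem $\partial CF\Pick'(\R)$ then requires $f\in\Pick$ analytic at $x$ with $f(x)=a^0$ and $f'(x)\le a^1$, and $H_1(a)=[a^1]$; if $a^1>0$ the function $f(z)=a^0+a^1(z-x)$ works, if $a^1=0$ the constant $f\equiv a^0$ works, and conversely any weak solution has first pseudo-Taylor coefficient $a^1=\lim_{y\to0+}(f(x+\ii y)-a^0)/(\ii y)$, whose real part is $\lim_{y\to0+}\im f(x+\ii y)/y\ge0$, so $a^1\ge0$. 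In the inductive step, $(4)\Rightarrow(3)\Rightarrow(2)$ are immediate from the definitions, $(2)\Rightarrow(5)$ holds because the derivatives of a function analytic at $x$ have nontangential limits equal to their values at $x$, $(5)\Rightarrow(6)$ because radial approach is nontangential, and $(6)\Rightarrow(1)$ follows from the implication (iii)$\Rightarrow$(i) of Theorem \ref{Taylor} applied with the $n$th target value replaced by $c^n:=\lim_{y\to0+}f^{(n)}(x+\ii y)/n!\le a^n$ (whose existence is part of hypothesis (6)), which exhibits $f$ as a weak solution. Only $(7)\Rightarrow(4)$ and $(1)\Rightarrow(7)$ use the inductive hypothesis.

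For $(7)\Rightarrow(4)$ I would split on the $(1,1)$ entry $a^1$ of $H_m(a)$. If $a^1=0$, then positivity of $H_m(a)$ forces, by repeatedly passing to the principal submatrix on the last $m-1$ indices (a zero diagonal entry of a positive matrix annihilates its row and column), $a^1=\dots=a^{n-1}=0$ and $a^n\ge0$, and the constant function $a^0$ is a real rational solution. If $a^1>0$, put $F(z)=\sum_{k=0}^n a^k(z-x)^k$ and let $G$ be the reduction of $F$ at $x$; then $G$ is real rational and analytic at $x$, and by Theorem \ref{congruent} (applied at the node $x$) the Hankel matrix $H_{m-1}(G)$ of its Taylor coefficients is congruent to $\schur H_m(F)=\schur H_m(a)$, hence positive. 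By the inductive hypothesis there is a real rational $g_0\in\Pick$, analytic at $x$, solving Problem $\partial CF\Pick'(\R)$ for the data read off from $G$; let $f_0$ be the augmentation of $g_0$ at $x$ by $a^0,a^1$. By the Remark following Theorem \ref{propfg}, $f_0$ is real rational, and by Theorem \ref{propfg}(2) $f_0\in\Pick$, $x$ is a $B$-point for $f_0$, $f_0(x)=a^0$, and $f_0'(x)=a^1$ because $\lim_{y\to0+}yg_0(x+\ii y)=0$ (as $g_0$ is analytic at $x$); in particular $f_0$ is analytic at $x$. Since $F$ is the augmentation of $G$ by $a^0,a^1$, the identity $f_0-F=(g_0-G)(F-a^0)(f_0-a^0)$ holds; writing $g_0(z)-G(z)=A(z-x)^{n-2}+\ont((z-x)^{n-2})$ with $A\le0$ and $(F(z)-a^0)(f_0(z)-a^0)=(a^1)^2(z-x)^2+O((z-x)^3)$ gives $f_0(z)-F(z)=A(a^1)^2(z-x)^n+\ont((z-x)^n)$, so $f_0$ is a real rational solution of Problem $\partial CF\Pick'(\R)$ for the data $a$.

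For $(1)\Rightarrow(7)$, let $f\in\Pick$ be a weak solution and let $\tilde a^n\le a^n$ be its $n$th pseudo-Taylor coefficient. Since $H_m(a)-H_m(a^0,\dots,a^{n-1},\tilde a^n)=(a^n-\tilde a^n)P$, where $P$ has $1$ in position $(m,m)$ and zeros elsewhere, and $a^n-\tilde a^n\ge0$, it suffices to prove $H_m(a^0,\dots,a^{n-1},\tilde a^n)\ge0$; so I may assume $f$ has pseudo-Taylor coefficients exactly $a^0,\dots,a^n$, all real. If $a^1=0$, the angular derivative $f'(x)$ vanishes, so $f$ is constant by the Carath\'eodory-Julia Theorem, forcing $a^1=\dots=a^n=0$ and $H_m(a)=0$. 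If $a^1>0$, then $f$ is non-constant with a $B$-point at $x$; let $g$ be its reduction, which lies in $\Pick$ by Theorem \ref{propfg}(1) and, by Corollary \ref{fg_C-point_N} and Proposition \ref{2augment_weak}(2), has a pseudo-Taylor expansion of order $n-2$ whose coefficients are the Taylor coefficients up to order $n-2$ of the reduction $G$ of $F(z)=\sum_{k=0}^n a^k(z-x)^k$. Thus $g$ is a weak solution of Problem $\partial CF\Pick'(\R)$ for the data supplied by $G$, so by the inductive hypothesis $H_{m-1}(G)\ge0$; by Theorem \ref{congruent} this matrix is congruent to $\schur H_m(F)=\schur H_m(a)$, so $\schur H_m(a)\ge0$, and since $a^1>0$ the Schur complement criterion yields $H_m(a)\ge0$.

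I expect the case $a^1>0$ of $(7)\Rightarrow(4)$ to be the main obstacle: one must verify that augmenting the lower-order solution reproduces the coefficients $a^0,\dots,a^{n-1}$ \emph{exactly} while relaxing only the $n$th, which is exactly where the identity $f_0'(x)=a^1$ from Theorem \ref{propfg}(2) and the sharp form $A(a^1)^2(z-x)^n+\ont((z-x)^n)$ of $f_0-F$ enter, together with the bookkeeping relating the pseudo-Taylor coefficients of $g_0$ and $f_0$ to those of $F$ and the genuine Taylor coefficients of $G$. The degenerate case $a^1=0$, and dually the fact that $a^1=0$ forces $f$ to be constant, are easy but indispensable.
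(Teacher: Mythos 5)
Your proof is correct, and its core coincides with the paper's: the only direction the paper actually proves here is (1)$\Rightarrow$(7), and your argument for it is essentially identical --- induction on $m$, Julia reduction (Theorem \ref{propfg}(1), i.e.\ Nevanlinna's refinement, which is what legitimises reducing a merely weak solution), Proposition \ref{2augment_weak}(2) to see that the reduction $g$ weakly solves the lower-order problem with data from $G$, and the Hankel--Schur complement identity of Theorem \ref{congruent} to climb back up, with the relaxation in the last entry absorbed by adding a positive multiple of the $(m,m)$ matrix unit. Where you genuinely diverge is in the treatment of the analytic-case equivalence: the paper simply cites \cite[Theorem 6.1]{ALY10} for (2)$\Leftrightarrow$(3)$\Leftrightarrow$(4)$\Leftrightarrow$(7), whereas you re-prove the construction direction (7)$\Rightarrow$(4) inside the same induction, via augmentation (Theorem \ref{propfg}(2), the Remark on degrees, the identity (\ref{fF}) and the condition (\ref{nopole}) to get $f_0'(x)=a^1$), splitting on $a^1=0$ versus $a^1>0$. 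This buys a self-contained proof using only this paper's toolkit, at the cost of carrying both directions of the equivalence through the induction --- which your cyclic scheme handles correctly, since $n-2$ is again odd. Your handling of (2)$\Rightarrow$(5)$\Rightarrow$(6)$\Rightarrow$(1) via Theorem \ref{Taylor} matches the paper's use of that theorem. One small imprecision: in the base case $m=1$ you write that a weak solution has first pseudo-Taylor coefficient $a^1$; for the relaxed problem it is only some $c^1\le a^1$, but since the Carath\'eodory--Julia argument gives $c^1\ge 0$ the conclusion $H_1(a)=[a^1]\ge 0$ still follows, so this is cosmetic rather than a gap.
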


\begin{proof} 
By \cite[Theorem 6.1]{ALY10},
(2) $\Leftrightarrow$ (3) $\Leftrightarrow$ (4) $\Leftrightarrow$ (7), and obviously (4)$\Rightarrow$(1) and (4)$\Rightarrow$(5). By Theorem \ref{Taylor}, (5)$\Leftrightarrow$(6)$\Leftrightarrow$(1).

We must show that (1)$\Rightarrow$(7).   Suppose that $f$ is a weak solution of Problem $\partial CF\Pick'(\R)$, with pseudo-Taylor expansion
\[
f(z)=\sum_{j=0}^n f_j(z-x)^j +\ont((z-x)^n).
\]
Thus $f_j=a^j$ for $j\leq n-1$ and $f_n \leq a_n$.
We can assume that $f$ is nonconstant.

 Consider the case that $m=1=n$.
We have  $\im f(x+\ii y) = a^1y + o(y)$, and hence
\[
\lim_{y \to 0+} \frac{\im f(x+iy)}{y} =   a^1 < \infty.
\]
It follows from the Carath\'{e}odory-Julia theorem \cite{car54} that $a^1> 0$, which is to say that $H_1(a) > 0$. Thus 
 (1)$\Rightarrow$(7) when $m=1$.

Now consider $m \ge 2$ and suppose the implication (1)$\Rightarrow$(7)
valid for $m-1$.   
Let
\[
F(z)= \sum_{j=0}^n f_j(z-x)^j,
\]
so that $f(z)-F(z)=\ont((z-x)^n)$.
Let $g, G$ be the reductions of $f, F$ respectively at $x$; then $g\in\Pick$.  By Proposition \ref{2augment_weak}, $g$ has a pseudo-Taylor expansion of order $n-2=2m-3$ about $x$ and
\beq\label{g-G}
 g(z)-G(z) =    \ont((z-x)^{2m-3}).
\eeq
That is to say, $g$ is a weak solution of Problem $\partial CF\Pick'(\R)$ with data $G$ and with new ``$n$" equal to $n-2=2m-3$.  Accordingly this last problem has a weak solution, and we may invoke the inductive hypothesis to assert that $H_{m-1}(G) \geq 0$.  

By the Hankel identity, Theorem \ref{congruent}, 
$ H_{m-1}(G)$ is congruent to $\schur H_{m}(F)$.  Since (again by the Carath\'{e}odory-Julia theorem) $f_1= a^1 >0$, it follows that $ H_{m}(F) \ge 0$.
Now $H_m(a)$ and $H_m(F)$ differ only in their southeast corner entries -- in fact
\[
H_m(a) = H_m(F) + \diag \{0,0,\dots,a^n -f_n\}  \geq H_m(F) \ge 0.
\]
Thus (1)$\Rightarrow$(7), and the theorem follows by induction. 
 \end{proof}
We now consider the question of determinacy for Problem $\partial CF\Pick'(\R)$.  In the analytic case, the problem is determinate if and only if the associated Hankel matrix is positive and singular \cite[Theorem 5.1]{ALY10}.  In principle, there might be one analytic solution and many weak solutions of a problem, but in fact this does not happen.  
\begin{theorem} \label{probprimeHD_weak} 
Let $ x\in\R$, $a=(a^0,\dots,a^{2m-1})\in \R^{2m}$ for some $m \ge 1$. Problem $\partial CF\Pick'(\R)$ has a unique weak solution if and only if  the associated Hankel matrix $H_m(a)$ is positive and singular.
\end{theorem}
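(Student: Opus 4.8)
The plan is to prove the two implications separately, dispatching the substantive (``only one'') direction by induction on $m$, in the spirit of the other arguments in this paper: Julia reduction together with the Hankel identity of Theorem~\ref{congruent}.

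Suppose first that Problem $\partial CF\Pick'(\R)$ has a unique weak solution. Since it has at least one, Theorem~\ref{weakequiv}, applied with the odd integer $n=2m-1$, gives $H_m(a)\geq0$. If $H_m(a)$ were nonsingular it would be positive definite, and by \cite[Theorem 5.1]{ALY10} the analytic Problem $\partial CF\Pick'(\R)$ would then fail to be determinate; since it has at least one analytic solution (again by Theorem~\ref{weakequiv}), it would have at least two. But any solution of $\partial CF\Pick'(\R)$ that is analytic at $x$ is a weak solution, its Taylor expansion at $x$ serving as a pseudo-Taylor expansion; hence there would be at least two weak solutions, contradicting uniqueness. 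Therefore $H_m(a)$ is positive and singular.

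For the converse I would argue by induction on $m$. Assume $H_m(a)$ is positive and singular. Existence of a weak solution is immediate from Theorem~\ref{weakequiv}, so only uniqueness requires work; let $f$ be any weak solution. The $(1,1)$ entry of $H_m(a)$ is $a^1$. If $a^1=0$, then positivity of $H_m(a)$, through propagation of a vanishing diagonal entry of a positive semidefinite matrix along the Hankel pattern, forces $a^1=\cdots=a^{2m-2}=0$ and $a^{2m-1}\geq0$; moreover, were $f$ non-constant, $x$ would be a $B$-point for $f$ and the Carath\'eodory-Julia theorem would give $f'(x)>0$, while $f'(x)$ equals the first pseudo-Taylor coefficient of $f$ at $x$, which the interpolation conditions require to equal $a^1=0$ (when $m\geq2$) or to be at most $a^1=0$ (when $m=1$) --- either way contradicting $f'(x)>0$. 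Hence the only weak solution is the constant $a^0$; this in particular disposes of the base case $m=1$, since $H_1(a)$ is positive and singular precisely when $a^1=0$. If instead $a^1>0$, then necessarily $m\geq2$ and $f$ is non-constant. Put $F(z)=\sum_{j=0}^{2m-1}a^j(z-x)^j$; the weak solution $f$ satisfies $f(z)-F(z)=A(z-x)^{2m-1}+\ont((z-x)^{2m-1})$ for some $A\leq0$, so Lemma~\ref{2augment_relax_weak} (with $N=2m-1\geq2$) shows that the reduction $g$ of $f$ at $x$ satisfies $g(z)-G(z)=\frac{A}{(a^1)^2}(z-x)^{2m-3}+\ont((z-x)^{2m-3})$, where $G$ is the reduction at $x$ of the \emph{fixed} polynomial $F$. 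Consequently $g$ is a weak solution of Problem $\partial CF\Pick'(\R)$ with the data $(G_0,\dots,G_{2m-3})$, the Taylor coefficients of $G$ at $x$ --- data which does not depend on the choice of $f$. By Theorem~\ref{congruent}, $H_{m-1}(G_0,\dots,G_{2m-3})$ is congruent to $\schur H_m(F)=\schur H_m(a)$, which is positive semidefinite (as $a^1>0$ and $H_m(a)\geq0$) and singular (its rank equals $\rank H_m(a)-1\leq m-2$). The inductive hypothesis, applied to this $(m-1)$-sized problem, yields a unique weak solution, so $g$ is uniquely determined; and since $f(x)=a^0$ and $f'(x)=a^1$, the function $f$ is the augmentation of $g$ at $x$ by $a^0,a^1$ (Definition~\ref{defreduce}), hence is itself uniquely determined. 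This completes the induction.

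I expect the crux to be the step just outlined: one must guarantee that, across all weak solutions of the given problem, the reduced problems for the corresponding functions $g$ carry \emph{identical} data, for otherwise the inductive hypothesis cannot be brought to bear. This is precisely why the reduction is performed against the fixed polynomial $F$ formed from the prescribed numbers $a^j$, using the relaxed reduction identity of Lemma~\ref{2augment_relax_weak}, rather than against the pseudo-Taylor polynomial of $f$ itself, whose top coefficient $f_{2m-1}$ is pinned down only by the inequality $f_{2m-1}\leq a^{2m-1}$. Subsidiary points requiring care are: that a pseudo-Taylor expansion of order $\geq1$ already makes $x$ a $B$-point, so that $g$ exists and lies in $\Pick$ (Theorem~\ref{propfg}(1)); that reduction and augmentation are mutually inverse, which legitimises the recovery of $f$ from $g$; and the elementary but separate treatment of the degenerate case $a^1=0$ via Carath\'eodory-Julia.
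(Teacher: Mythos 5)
Your proof is correct and follows essentially the same route as the paper's: necessity via Theorem \ref{weakequiv} together with the indeterminacy result of \cite[Theorem 5.1]{ALY10}, and uniqueness by induction on $m$ using reduction against the fixed polynomial $F$ built from the data, Lemma \ref{2augment_relax_weak}, the Hankel identity of Theorem \ref{congruent}, and recovery of $f$ from $g$ by augmentation at $x$ by $a^0, a^1$. Your explicit treatment of the degenerate case $a^1=0$ for every $m$ (the paper spells this out only at $m=1$, its inductive step implicitly assuming $a^1>0$ when dividing by $(a^1)^2$) is a welcome extra bit of care but does not change the substance of the argument.
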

\begin{proof} By \cite[Theorem 5.1]{ALY10}, if $H_m(a) > 0$ the Problem $\partial CF\Pick(\R)$ is indeterminate. Thus,  by Theorem \ref{weakequiv}, necessity holds.

Suppose that  $H_m(a)$ is positive and singular. We show that  
Problem $\partial CF\Pick'(\R)$ has a unique weak solution.

Consider the case $m=1$. Here  $a^1=0$, and the constant function equal to $a^0$ is a solution of  Problem $\partial CF\Pick'(\R)$. 
Let $f$ be any weak solution, so that $f \in \Pick$ and
\beq\label{m=1}
 f(z) = a^0 + \ont ((z-x)).
\eeq
We have 
$ \im f(x+\ii y)/y \to 0$ as $y \to 0+$. Hence
\beq \label{B_cond}
\alpha \stackrel{\rm def}{=} \liminf_{z\to x, z \in \Pi} \frac{\im f(z)}{\im z}\leq \lim_{y\to 0+} \frac{\im f(x+\ii y)}{y} =0.
\eeq
By the Carath\'{e}odory-Julia theorem \cite{car54}, if $f$ is nonconstant then $\alpha > 0$. Thus the only weak solution is the constant $a^0$. The assertion of the theorem is therefore true when $m=1$.

Suppose the assertion holds for some $m \ge 1$; we prove it holds for $m+1$.

Let $H_{m+1}(a)$ be positive and singular for some $a=(a^0,\dots,a^{2m+1})$. Let  $F(z)=\sum_0^{2m+1} a^j (z-x)^j$.
 Assume that functions
$f_1$ and $f_2$ in $\Pick$ are solutions of the problem $\partial CF\Pick'(\R)$ with data $x$ and $a$. Then, for some $A_1, A_2 \le 0$,
\beq\label{f1f2FoN_relax}
f_i(z) -F(z) = A_i(z-x)^{2m+1} + \ont((z-x)^{2m+1}),\;\; i=1,2.
\eeq
Let $g_1, g_2$, $G$ be the reductions of $f_1, f_2$, $F$ respectively at $x$. Then $g_1, g_2 \in \Pick$ and $G$ is a rational function that is analytic at $x$. By Lemma \ref{2augment_relax_weak},
\beq\label{g1g2G_relax}
g_i(z) -G(z) = \frac{A_i}{(a^1)^2} (z-x)^{2m-1} + \ont((z-x)^{2m-1}), \;\; i=1,2.
\eeq
Since $\frac{A_i}{(a^1)^2} \le 0$, it follows that $g_1$ and $g_2$ are weak solutions of Problem $\partial CF\Pick'(\R)$ with data $x$, $b$ where $b = (b^0,\dots,b^{2m-1})$ comprises the first $2m$ Taylor coefficients of $G$ about $x$.
By Theorem \ref{congruent},  the associated Hankel matrix of this problem,  $H_m(b)$ is congruent to  $\schur H_{m+1}(a)$. Since $H_{m+1}(a)$ is positive and singular, so is  $H_m(b)$. By the inductive hypothesis, the problem has a unique solution, and so $g_1=g_2$.  Since $f_1, \ f_2$ are both equal to the augmentation of this function $g_1=g_2$ at $x$ by  $a^0$, $a^1$ we have $f_1=f_2$. Thus, by induction, the statement of Theorem \ref{probprimeHD_weak} holds for all $m\geq 1$.
\end{proof}

\section{Weak solutions of Problem $\partial CF\Pick$} \label{weak}

In this section we prove the main result of the paper, a criterion for the existence of a weak solution of $\partial CF\Pick(\R)$.  As in \cite{ALY10} we deduce the result from the corresponding criterion for Problem $\partial CF\Pick'(\R)$, but there is a subtlety: the deduction depends on the condition for the uniqueness of solutions of Problem $\partial CF\Pick'(\R)$, and now this must be understood in the sense of uniqueness in the class of weak solutions.  We shall therefore need to use Theorem \ref{probprimeHD_weak} above.

We shall say that the Hankel matrix  $H_{m}(a)$ 
is {\em southeast-minimally positive} if $H_{m}(a) \ge 0$ and, for every  $\varepsilon >0$, $H_{m}(a) - {\rm diag}\{0,0,\dots, \varepsilon\}$ is not positive.
We shall abbreviate ``southeast-minimally" to ``SE-minimally".

\begin{theorem} \label{main_theorem}
Let $n$ be an odd positive integer and let $a=(a^0,\dots,a^n)\in \R^{n+1}$. 
The following  statements are equivalent:
\begin{enumerate}
\item[\rm(1)]  Problem $\partial CF\Pick(\R)$ has a weak solution;
\item[\rm(2)] Problem $\partial CF\Pick(\R)$ has a nontangential solution;
\item[\rm(3)] Problem $\partial CF\Pick(\R)$ has a radial solution;
\item[\rm(4)]  Problem $\partial CF\Pick(\R)$ has a solution which is analytic at $x$;
\item[\rm(5)] the associated Hankel matrix $H_{m}(a)$, $n=2m -1$, is either positive definite or SE-minimally positive. 
\end{enumerate} 

Moreover, the problem has a {\em unique} weak solution if and only if $H_m(a)$ is SE-minimally positive, and in this case the solution is rational of degree equal to $\rank H_m(a)$.

\end{theorem}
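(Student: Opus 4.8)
The plan is to deduce the theorem from the results of Section~\ref{relax} on the relaxed problem $\partial CF\Pick'(\R)$ (Theorems~\ref{weakequiv} and~\ref{probprimeHD_weak}), together with Theorem~\ref{Taylor} and the analytic solvability criterion of \cite{ALY10}. It is convenient to write $a(s)=(a^0,\dots,a^{n-1},s)$ for $s\in\R$ and to let $E$ be the $m\times m$ matrix $\diag\{0,\dots,0,1\}$, so that $H_m(a(s))=H_m(a)-(a^n-s)E$. Several implications are immediate and I would dispose of them first. By Theorem~\ref{Taylor}, for any function analytic on $\Pi$ the pseudo-Taylor expansion in~(1), the nontangential limits in~(2) and the radial limits in~(3) are mutually equivalent, so (1)$\Leftrightarrow$(2)$\Leftrightarrow$(3); and (4)$\Rightarrow$(1) trivially, since a Taylor expansion is a pseudo-Taylor expansion. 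For (5)$\Rightarrow$(4) I would argue: if $H_m(a)>0$ then by \cite[Theorem~5.1]{ALY10} Problem~$\partial CF\Pick(\R)$ is indeterminate, hence has a solution analytic at $x$; if instead $H_m(a)$ is SE-minimally positive then $H_m(a)$ is positive and singular, so Theorem~\ref{probprimeHD_weak} supplies a unique weak solution $f_0$ of $\partial CF\Pick'(\R)$ with data $a$, which by Theorem~\ref{weakequiv} is real rational, and whose $n$th pseudo-Taylor coefficient $b$ satisfies $b\le a^n$; were $b<a^n$, then $f_0$ would be a weak solution of $\partial CF\Pick'(\R)$ with data $a(b)$, forcing $H_m(a)-(a^n-b)E\ge0$ by Theorem~\ref{weakequiv} and contradicting SE-minimality, so $b=a^n$ and $f_0$, being rational and therefore analytic at $x$, solves $\partial CF\Pick(\R)$. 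This leaves only (1)$\Rightarrow$(5).

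For (1)$\Rightarrow$(5) I would take a weak solution $f$ of $\partial CF\Pick(\R)$. Its $n$th pseudo-Taylor coefficient equals $a^n\le a^n$, so $f$ is in particular a weak solution of $\partial CF\Pick'(\R)$, whence $H_m(a)\ge0$ by Theorem~\ref{weakequiv}. Suppose, towards a contradiction, that $H_m(a)$ is neither positive definite nor SE-minimally positive; then $H_m(a)$ is singular and there is $\ep_0>0$ with $H_m(a(a^n-\ep_0))=H_m(a)-\ep_0E\ge0$. By Theorem~\ref{weakequiv} the problem $\partial CF\Pick'(\R)$ with data $a(a^n-\ep_0)$ has a weak solution $h$; since the $n$th pseudo-Taylor coefficient of $h$ is at most $a^n-\ep_0<a^n$, $h$ is also a weak solution of $\partial CF\Pick'(\R)$ with data $a$. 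But $H_m(a)$ is positive and singular, so by Theorem~\ref{probprimeHD_weak} that problem has a unique weak solution, which must therefore be $h$; in particular its $n$th pseudo-Taylor coefficient is $<a^n$. Finally $f$, being a weak solution of $\partial CF\Pick(\R)$ with data $a$, is a weak solution of $\partial CF\Pick'(\R)$ with data $a$, so $f=h$, and hence the $n$th pseudo-Taylor coefficient of $f$ is $<a^n$, contradicting that it equals $a^n$. Thus $H_m(a)$ is positive definite or SE-minimally positive, which is~(5), and the equivalence of (1)--(5) follows.

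For the uniqueness assertion I would argue as follows. If $H_m(a)$ is SE-minimally positive, then the unique weak solution $f_0$ of $\partial CF\Pick'(\R)$ with data $a$ furnished above is a weak solution of $\partial CF\Pick(\R)$ with data $a$, and conversely every weak solution of $\partial CF\Pick(\R)$ with data $a$ is a weak solution of $\partial CF\Pick'(\R)$ with data $a$, hence equals $f_0$; so the weak solution is unique. If $H_m(a)$ is not SE-minimally positive, then either (5) fails, in which case by the equivalences there is no weak solution, or $H_m(a)>0$, in which case $\partial CF\Pick(\R)$ is indeterminate by \cite[Theorem~5.1]{ALY10} and so has at least two solutions analytic at $x$ and hence at least two weak solutions; either way the weak solution is not unique. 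Finally, in the SE-minimally positive case, that $f_0$ is rational of degree $\rank H_m(a)$ I would obtain by induction on $m$ along Julia reduction: for $m=1$, SE-minimality forces $a^1=0$, so $\rank H_1(a)=0$ and $f_0$ is the constant $a^0$; for $m\ge2$, if $\rank H_m(a)\ge1$ then $a^1>0$ (otherwise $f_0$ would be constant and $H_m(a)=0$), the reduction $g_0$ of $f_0$ at $x$ is, by Proposition~\ref{2augment_weak}, the unique weak solution of an instance of $\partial CF\Pick(\R)$ with some data $b$ (uniqueness because augmentation by $a^0,a^1$ carries weak solutions of the $b$-problem to weak solutions of the $a$-problem, of which there is only $f_0$), so the equivalence just proved, applied with $m-1$ in place of $m$, shows $H_{m-1}(b)$ is SE-minimally positive; moreover $H_{m-1}(b)$ is congruent to $\schur H_m(a)$ by Theorem~\ref{congruent} and hence has rank $\rank H_m(a)-1$, so $\deg g_0=\rank H_m(a)-1$ by the inductive hypothesis, and $\deg f_0=\deg g_0+1$ by the Remark following Theorem~\ref{propfg}.

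The chief obstacle is the case distinction in (1)$\Rightarrow$(5), namely ruling out a weak solution when $H_m(a)$ is positive and singular but not SE-minimally positive --- precisely the configuration that led Nevanlinna astray. The only lever available is determinacy of the relaxed problem, and it is essential that Theorem~\ref{probprimeHD_weak} yields uniqueness \emph{within the class of weak solutions}, not merely among solutions analytic at $x$; granted that, the monotonicity $H_m(a(s))=H_m(a)-(a^n-s)E$ and the weak solvability criterion of Theorem~\ref{weakequiv} do the rest. A lesser point, needed only for the degree statement, is that one cannot invoke congruence-invariance of SE-minimal positivity (which fails in general), so the SE-minimal positivity of the Hankel matrix of the reduced problem must be obtained, as above, from uniqueness of its weak solution rather than from Theorem~\ref{congruent} directly.
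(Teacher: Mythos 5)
Your proof of the key implication (1)$\Rightarrow$(5) is essentially the paper's own argument: pass to the relaxed problem to get $H_m(a)\ge0$ via Theorem~\ref{weakequiv}, note singularity, invoke the uniqueness of the \emph{weak} solution of $\partial CF\Pick'(\R)$ from Theorem~\ref{probprimeHD_weak}, and use failure of SE-minimality to manufacture a second weak solution with $n$th coefficient strictly below $a^n$, a contradiction; likewise (1)$\Leftrightarrow$(2)$\Leftrightarrow$(3) via Theorem~\ref{Taylor} and (4)$\Rightarrow$(1) are handled as in the paper. Where you differ is that the paper disposes of (4)$\Leftrightarrow$(5), and with it the ``Moreover'' clause, by citing \cite[Theorem 7.1]{ALY10}, whereas you re-derive (5)$\Rightarrow$(4) inside the present framework (Theorems~\ref{weakequiv} and \ref{probprimeHD_weak} plus the observation that in the SE-minimal case the unique weak solution of the relaxed problem must have $n$th coefficient exactly $a^n$), and you prove the degree formula by an induction along Julia reduction, correctly obtaining SE-minimality of the reduced Hankel matrix from uniqueness of the reduced problem's weak solution rather than from congruence (which indeed does not preserve SE-minimality) --- this buys a more self-contained treatment at the cost of some length. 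Two small remarks: your appeal to \cite[Theorem 5.1]{ALY10} for indeterminacy of $\partial CF\Pick(\R)$ when $H_m(a)>0$ mirrors the paper's own (somewhat loose) use of that citation, but strictly that statement concerns the relaxed problem, and the cleaner reference for existence and non-uniqueness of analytic solutions in the positive definite case is \cite[Theorems 7.1 and 8.3]{ALY10}; and in the induction for the degree you apply Proposition~\ref{2augment_weak}(1) with $G$ the reduction of a polynomial, which need not lie in $\Pick$, though the proof of that proposition is purely algebraic and goes through --- the paper glosses the same point.
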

\begin{proof} 
By \cite[Theorem 7.1]{ALY10},
(4) $\Leftrightarrow$ (5). By Theorem \ref{Taylor}, (1) $\Leftrightarrow$ (2) $\Leftrightarrow$ (3). It is clear that (4)  $\Rightarrow$ (1). We will show that (1)  $\Rightarrow$ (5). 

(1) $\Rightarrow$ (5). Suppose that Problem $\partial CF\Pick(\R)$ has a weak solution $f\in\Pick$ but that its  Hankel matrix $H_m(a)$ is neither positive definite nor SE-minimally positive.  {\em A fortiori} $f$ is a weak solution of Problem $\partial CF\Pick'(\R)$, and so, by Theorem \ref{weakequiv}, $H_m(a)\geq 0$.  Since $H_m(a)$ is not positive definite, $H_m(a)$ is singular, and so, by Theorem \ref{probprimeHD_weak}, Problem $\partial CF\Pick'(\R)$ has the {\em unique} weak solution $f$.  Since $H_m(a)$ is not SE-minimally positive there is some positive $a^{n}{'} < a^{n}$ such that $H_m(f)\geq 0$, where $H_m(f)$ is the matrix obtained when the $(m,m)$ entry $a^{n}$, $n=2m-1$, of $H_m(a)$ is replaced by $a^{n}{'}$.  Again by Theorem \ref{weakequiv}, there exists $h\in\Pick$  such that 
$$
\lim_{z \nt x}\frac{h^{(k)}(z)}{k!} = a^k, \qquad k=0,1,\dots,n-1, \;\;\text{ and } \;\;  \lim_{z \nt x} \frac{h^{(n)}(z)}{n!} \le a^{n}{'}< a^{n}.
$$
 In view of the last relation  we have $h\neq f$, while clearly $h$ is a weak solution of Problem $\partial CF\Pick'(\R)$, as is $f$.  This contradicts the uniqueness of the weak solution $f$.  Hence if the problem is solvable then either $H_m(a)>0$ or $H_m(a)$ is SE-minimally positive.
\end{proof}

We note that a different solvability criterion is given by D. Georgijevi\'c in \cite{Geo98}: Problem $\partial CF\Pick(\R)$ is solvable if and only if $H_m(a) \geq 0$ and its rank is equal to the rank of each of its singular submatrices.  His methods are quite different from ours.

There is also a version of Theorem \ref{main_theorem} for even $n$.
\begin{theorem}\label{main_theorem_even}
Let $n$ be an even positive integer.
 the following  statements are equivalent:
\begin{enumerate}
\item[\rm(1)]  Problem $\partial CF\Pick(\R)$ has a weak solution;
\item[\rm(2)] Problem $\partial CF\Pick(\R)$ has a nontangential solution;
\item[\rm(3)] Problem $\partial CF\Pick(\R)$ has a radial solution;
\item[\rm(4)]  Problem $\partial CF\Pick(\R)$ has a solution which is analytic at $x$;
\item[\rm(5)] either the associated Hankel matrix $H_{m}(a)$, $n=2m$, is positive definite or both $H_{m}(a)$ is SE-minimally positive and $a^{n}$ satisfies 
\beq \label{form_a_2m_Main}
a^{n} = \left[ \begin{array}{cccc}
  a^m & a^{m+1} & \dots &a^{m+r-1}\end{array}\right] H_r(a)^{-1}
\left[\begin{array}{c} a^{m+1} \\ a^{m+2}\\ \cdot \\ a^{m+r} \end{array}\right] 
\eeq 
where $r=\rank H_m(a)$.
\end{enumerate} 

Moreover, the problem has a {\em unique} solution if and only if $H_m(a)$ is SE-minimally positive and $a^{n}$ satisfies equation {\rm (\ref{form_a_2m_Main})}.
\end{theorem}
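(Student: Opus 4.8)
The plan is to run the argument of Theorem~\ref{main_theorem} again, this time treating that theorem (with its determinacy clause) together with Theorems~\ref{weakequiv} and~\ref{congruent} as black boxes. Write $n=2m$, and observe at the outset that $H_m(a)$ is assembled from $a^1,\dots,a^{2m-1}$ and does not see $a^{2m}$ at all; this is exactly why the extra constraint (\ref{form_a_2m_Main}) has to enter. As before, (1)$\Leftrightarrow$(2)$\Leftrightarrow$(3) is immediate from Theorem~\ref{Taylor} and (4)$\Rightarrow$(1) is trivial, so it remains to establish (1)$\Rightarrow$(5), (5)$\Rightarrow$(4), and the determinacy assertion.

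For (1)$\Rightarrow$(5): a weak solution $f$ of the even problem has, a fortiori, a pseudo-Taylor expansion of order $2m-1$ with coefficients $a^0,\dots,a^{2m-1}$, so it is a weak solution of Problem~$\partial CF\Pick(\R)$ for the truncated data $(a^0,\dots,a^{2m-1})$ of odd order $2m-1$. Theorem~\ref{main_theorem} then forces $H_m(a)$ to be positive definite or SE-minimally positive; in the first case we are done. In the second, Theorem~\ref{main_theorem} also says the truncated problem has a \emph{unique} weak solution $f_0$, which is rational of degree $r\df\rank H_m(a)$ and analytic at $x$; since $f$ is such a solution as well, $f=f_0$, and so $a^{2m}$ equals the $(2m)$th Taylor coefficient $\hat a^{2m}$ of $f_0$ at $x$. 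To identify $\hat a^{2m}$ with the right side of (\ref{form_a_2m_Main}) (the case $r=0$ being trivial): applying Julia reduction to $f_0$ successively $r$ times — each step legitimate because the function involved is a non-constant member of $\Pick$ analytic at $x$ — and chaining the congruence of Theorem~\ref{congruent} along the way, one obtains $H_r(f_0)=H_r(a)>0$ and $\rank H_k(f_0)=\min(k,r)$ for all $k$. In particular $H_{m+1}(f_0)$, whose leading $m\times m$ block is $H_m(a)$ and whose bordering column begins with $a^{m+1},\dots,a^{2m-1},\hat a^{2m}$, has rank $r$, so that bordering column lies in the span of the first $r$ columns of $H_m(a)$ with coefficient vector $H_r(a)^{-1}(a^{m+1},\dots,a^{m+r})^{\mathrm T}$ (read off from the first $r$ coordinates); reading off instead the $m$th coordinate gives $\hat a^{2m}=(a^m,\dots,a^{m+r-1})\,H_r(a)^{-1}(a^{m+1},\dots,a^{m+r})^{\mathrm T}$, which is (\ref{form_a_2m_Main}).

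For (5)$\Rightarrow$(4) and determinacy: suppose first $H_m(a)>0$. Adjoining a real $a^{2m+1}$ above the threshold $u^{\mathrm T}H_m(a)^{-1}u$, where $u=(a^{m+1},\dots,a^{2m})^{\mathrm T}$, makes the $(m+1)\times(m+1)$ bordered Hankel matrix of $(a^0,\dots,a^{2m+1})$ positive definite, so Theorem~\ref{main_theorem}, applied to the odd problem of order $2m+1$ with this extended data, supplies a solution analytic at $x$ whose $(2m+1)$th Taylor coefficient is exactly $a^{2m+1}$; it is in particular an analytic, hence weak, solution of the even problem, and letting $a^{2m+1}$ range over $(u^{\mathrm T}H_m(a)^{-1}u,\infty)$ produces infinitely many of them — so when $H_m(a)>0$ the even problem is solvable but indeterminate. (For this last step one may alternatively use Theorem~\ref{weakequiv} and the relaxed problem $\partial CF\Pick'(\R)$ of order $2m+1$.) Suppose instead $H_m(a)$ is SE-minimally positive. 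If (\ref{form_a_2m_Main}) holds, the function $f_0$ of the previous paragraph is analytic at $x$ and, by the computation of $\hat a^{2m}$, is an analytic solution of the even problem; and any weak solution restricts to a weak solution of the truncated odd problem, which is unique by Theorem~\ref{main_theorem}, so $f_0$ is the \emph{only} weak solution, rational of degree $r$. If (\ref{form_a_2m_Main}) fails — or if $H_m(a)$ is neither positive definite nor SE-minimally positive — then there is no weak solution, by the contrapositive of (1)$\Rightarrow$(5). This gives (5)$\Rightarrow$(4) and the determinacy statement.

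The crux is the identification of $\hat a^{2m}$ with the closed form (\ref{form_a_2m_Main}): one has to squeeze out of Theorem~\ref{congruent} not merely the preservation of positive definiteness but the rank stabilization $\rank H_k(f_0)=\min(k,r)$ and the positive definiteness of the leading block $H_r(a)$, and then extract the forced value from the rank-$r$ bordering relation for $H_{m+1}(f_0)$. The degenerate cases — above all $r=0$, and the compatibility of all of this with the precise definition of SE-minimal positivity (note that without that hypothesis $H_r(a)$ need not be invertible and (\ref{form_a_2m_Main}) can be meaningless) — will also need a little care.
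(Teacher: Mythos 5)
Your proof is correct, and it follows the paper's overall strategy --- truncate the even data to the odd problem of order $2m-1$ and invoke Theorem \ref{main_theorem} --- but it is genuinely more self-contained at the two points where the paper leans on the companion paper \cite{ALY10}. For (4)$\Leftrightarrow$(5) the paper simply cites \cite[Theorem 7.1]{ALY10}, and for the identity (\ref{form_a_2m_Main}) it cites \cite[Proposition 7.4]{ALY10} (treating $a^1=0$ separately via the constant solution); you instead (a) obtain (5)$\Rightarrow$(4) in the positive definite case by bordering the data with a sufficiently large $a^{2m+1}$ and applying Theorem \ref{main_theorem} at odd order $2m+1$, which simultaneously yields the indeterminacy half of the ``Moreover'' clause, and (b) derive (\ref{form_a_2m_Main}) from the unique solution $f_0$ of the truncated odd problem by iterated Julia reduction, the congruence/rank identity of Theorem \ref{congruent}, and the resulting rank stabilization $\rank H_k(f_0)=\min(k,r)$ with $H_r(a)$ invertible. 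Point (b) is exactly the content the paper out-sources to \cite[Proposition 7.4]{ALY10}, and your sketch of it is sound, but to make it airtight you should record the ingredients: $f_0$ is real rational of degree exactly $r$ and analytic at $x$ (Theorem \ref{main_theorem}, plus the observation that a pole at $x$ is incompatible with a finite nontangential limit); reduction lowers the degree by exactly one, since it inverts augmentation (cf.\ the Remark after Theorem \ref{propfg}), so the reduced functions remain non-constant --- hence have positive derivative at $x$ by Carath\'eodory--Julia --- until step $r$; and SE-minimal positivity forces $H_m(a)$ to be singular, so $r\le m-1$ and the first $r$ entries of the bordering column of $H_{m+1}(f_0)$ are the given data $a^{m+1},\dots,a^{m+r}$ rather than $\hat a^{2m}$ itself. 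What your route buys is independence from \cite{ALY10} beyond what Theorem \ref{main_theorem} already encapsulates, together with an explicit proof of the determinacy statement (which the paper's proof does not spell out); what the paper's route buys is brevity.
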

\begin{proof}
By \cite[Theorem 7.1]{ALY10},
(4) $\Leftrightarrow$ (5). By Theorem \ref{Taylor}, (1) $\Leftrightarrow$ (2) $\Leftrightarrow$ (3). It is clear that (4)  $\Rightarrow$ (2). We will show that (2)  $\Rightarrow$ (5).

(2) $\Rightarrow$ (5).  Suppose that Problem $\partial CF\Pick(\R)$ has a weak solution $f\in\Pick$  such that $\lim_{z \nt x} f^{(k)}(z)/k! = a^k$ for $k=0,1,\dots,2m$. This 
$f\in\Pick$ is also a weak solution of Problem $\partial CF\Pick(\R)$ for $n= 2m-1$.
The  Hankel matrix $H_m(a)$ for Problem $\partial CF\Pick(\R)$ with $n=2m$ and with $n= 2m-1$ is the same.
By Theorem \ref{main_theorem}, $H_m(a)$ is  positive definite or SE-minimally positive. 

In the case that $a^1=0$, the constant function $f(z)=a^0$ is the solution of $\partial CF\Pick(\R)$. Therefore, $a^2 =a^3= \dots = a^{2m}=0$. Thus $H_m(a)$ is  SE-minimally positive and $a^{n}$ satisfies (\ref{form_a_2m_Main}).

If $H_m(a)$ is  SE-minimally positive and $a^1>0$ then by \cite [Proposition 7.4]{ALY10}, $a^{n}$ satisfies (\ref{form_a_2m_Main}). 
\end{proof}

\begin{remark} \rm
In \cite[Theorem 8.3]{ALY10} we gave a parametrization of all solutions of Problem $\partial CF\Pick(\R)$ in the indeterminate case.  The parametrization expresses the general solution $f$ as a continued fraction, containing as parameter a free function $f_{m+1} \in\Pick$ that is analytic at $x$ (when $n=2m-1$).  It is simple to modify this parametrization to describe all {\em weak} solutions of Problem $\partial CF\Pick(\R)$: one simply takes the parameter set to be the set of all $f_{m+1} \in\Pick$ such that 
$\lim_{y\to 0+} yf_{m+1}(x+\ii y) =0$, with no requirement of analyticity at $x$.    This is essentially Nevanlinna's parametrization \cite[Satz I, p. 11]{Nev1922}. There is a similar parametrization in the case of even $n$.
\end{remark}

We conclude with an observation about a natural generalization of our main theorem.
Since functions in $\Pick$ can have simple poles with negative residue at points of the real axis, it is natural to study a slightly more general problem  than $\partial CF\Pick(\R)$, in which the $(-1)$th Laurent coefficient is also prescribed \cite{Geo98, ALY10}: \\

{\em Given $x\in\R$ and $a^{-1}, a^0, \dots, a^n \in \R$ with $a^1>0$, determine whether there is a function $f\in\Pick$ such that}
\beq\label{genweak}
f(z) = \frac{a^{-1}}{z-x} + a^0+a^1 (z-x)+ \dots+a^n (z-x)^n + \ont((z-x)^n).
\eeq
 
In fact this interpolation problem is equivalent to the problem  $\partial CF\Pick(\R)$ obtained by simply suppressing the condition on the $(-1)$th Laurent coefficient.
\begin{proposition}
There exists $f\in\Pick$ such that equation {\rm (\ref{genweak})} holds if and only if $a^{-1} \leq 0$ and there exists an $F\in\Pick$ such that
\beq\label{Fsolves}
F(z) =  a^0+a^1 (z-x)+ \dots+a^n (z-x)^n + \ont((z-x)^n).
\eeq
\end{proposition}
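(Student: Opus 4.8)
I would prove the two implications separately; the ``if'' direction is elementary, while the ``only if'' direction is where the Julia--Nevanlinna reduction of Section~\ref{reduction} does the real work.

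For the ``if'' direction the plan is simply to add a pole. First I would observe that for $a^{-1}\le 0$ the function $z\mapsto a^{-1}/(z-x)$ lies in $\Pick$, since $\im\bigl(a^{-1}/(z-x)\bigr)=-a^{-1}\,\im z/|z-x|^{2}\ge 0$ on $\Pi$. Then, given $F\in\Pick$ satisfying (\ref{Fsolves}), the function $f(z)=F(z)+a^{-1}/(z-x)$ is a sum of two members of $\Pick$, hence lies in $\Pick$, and adding the pole term to (\ref{Fsolves}) shows at once that $f$ satisfies (\ref{genweak}).

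For the ``only if'' direction, suppose $f\in\Pick$ satisfies (\ref{genweak}). The necessity of $a^{-1}\le 0$ is immediate: along the ray $z=x+\ii y$ equation (\ref{genweak}) gives $\im f(x+\ii y)=-a^{-1}/y+o(1)$ as $y\to 0+$, and $\im f\ge 0$ on $\Pi$ forces $a^{-1}\le 0$. If $a^{-1}=0$ there is nothing more to do, since then (\ref{genweak}) is (\ref{Fsolves}) with $F=f$; so I would assume $a^{-1}<0$ and set $m=-a^{-1}>0$ and $F(z)=f(z)-a^{-1}/(z-x)$. The heart of the argument is to recognise $F$ as a Julia reduction. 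Multiplying (\ref{genweak}) by $(z-x)$ gives $(z-x)f(z)\to a^{-1}\neq 0$ as $z\nt x$, so $f$ is non-constant and $|f(z)|\to\infty$ nontangentially; hence $\im f>0$ on $\Pi$ and $\phi:=-1/f$ is a non-constant member of $\Pick$. Moreover
\[
\phi(z)=-\frac{z-x}{(z-x)f(z)}=\frac{1}{m}(z-x)+\ont(z-x)\qquad\text{as }z\nt x,
\]
so $\phi$ has a pseudo-Taylor expansion of order $1$ about $x$ with $\phi(x)=0$ and $\phi'(x)=1/m$; in particular the Carath\'eodory condition (\ref{cc}) holds for $\phi$ at $x$, so $x$ is a $B$-point for $\phi$. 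Then the reduction $g$ of $\phi$ at $x$ (Definition~\ref{defreduce}(1)) is
\[
g(z)=-\frac{1}{\phi(z)-\phi(x)}+\frac{1}{\phi'(x)(z-x)}=f(z)+\frac{m}{z-x}=f(z)-\frac{a^{-1}}{z-x}=F(z).
\]
By Theorem~\ref{propfg}(1) the reduction $g=F$ belongs to $\Pick$, and subtracting $a^{-1}/(z-x)$ from (\ref{genweak}) shows that $F$ satisfies (\ref{Fsolves}).

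I expect the substantive input to be the invariance Theorem~\ref{propfg}(1); the whole argument hinges on spotting that $f(z)-a^{-1}/(z-x)$ is precisely the Julia reduction of $-1/f$ at $x$. The one point needing care is the passage from the expansion (\ref{genweak}) of $f$ to the order-$1$ pseudo-Taylor expansion of $\phi=-1/f$ at $x$ --- checking that $|f|\to\infty$ and $(z-x)f(z)\to a^{-1}$ nontangentially, and that these combine as displayed --- but this is routine bookkeeping with the $\ont$ notation, of the kind already carried out repeatedly in Section~\ref{reduction}.
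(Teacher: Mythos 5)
Your proof is correct and follows essentially the same route as the paper's: the sufficiency direction is identical, and your key step in the necessity direction --- recognising $F(z)=f(z)-a^{-1}/(z-x)$ as the Julia reduction of $-1/f$ at $x$ and invoking Theorem \ref{propfg}(1) --- is exactly the paper's argument. The only differences are cosmetic: you check the $B$-point condition via the order-$1$ pseudo-Taylor expansion of $-1/f$ rather than via the Carath\'eodory liminf for $f$, and you treat the trivial case $a^{-1}=0$ separately (a small tidiness the paper glosses over).
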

\begin{proof}
Sufficiency is easy: if $a^{-1} \leq 0$ and $F\in\Pick$ satisfies (\ref{Fsolves}) then the function
\[
f(z) = F(z) + a^{-1}/(z-x)
\]
belongs to $\Pick$ and satisfies (\ref{genweak}).

Conversely, suppose $f\in\Pick$ satisfies (\ref{genweak}), and let $F(z) = f(z)-a^{-1}/(z-x)$.  Certainly $F$ satisfies (\ref{Fsolves}); our task is to show that $a^{-1} \leq 0$ and $F\in\Pick$.  Since $f\in\Pick$, we have for any $y> 0$,
\[
0\leq \im  f(x+\ii y) = \im \frac{a^{-1}}{\ii y} +o(1)= -\frac{a^{-1}}{y} + o(1),
\]
and hence $a^{-1} \leq 0$.

Observe that $0$ is a $B$-point for the function $-1/f$ (which lies in $\Pick$) if and only if
\beq\label{li}
\liminf_{z\to x} \frac{\im f(z)}{|f(z)|^2 \im z} < \infty,
\eeq
a relation which does hold, in view of the fact that $f(z)= a^{-1}/(z-x) +\ont(1)$, and we find that the $\liminf$ (\ref{li}) is $-1/a^{-1}$.  Thus $-1/f$ has nontangential limit $0$ and angular derivative $-1/a^{-1}$ at $x$.  Let $G$ be the reduction of $-1/f$ at $0$.  By Theorem 3.2(1), Nevanlinna's refinement of Julia's lemma, $G\in\Pick$.  But
\[
G(z)= -\frac{1}{-1/f(z)} +\frac{1}{(-1/a^{-1})(z-x)} = f(z) - \frac{a^{-1}}{z-x} = F(z).
\]
Thus $F\in\Pick$.
\end{proof}
\begin{corollary}
Let $n$ be an odd positive integer, $n=2m-1$, and let $a^{-1}, a^0, \dots, a^n \in\R$ with $a^1>0$.  There exists a function $f$ in $\Pick$ such that equation {\rm (\ref{genweak})} holds if and only if $a^{-1} \leq 0$ and the Hankel matrix $H_m(a)$ is either positive definite or SE-minimally positive.
\end{corollary}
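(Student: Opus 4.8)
The plan is to obtain this as an immediate consequence of the preceding Proposition together with Theorem \ref{main_theorem}. The Proposition already does the substantive work: it asserts that a function $f\in\Pick$ satisfying the generalized condition (\ref{genweak}) exists if and only if $a^{-1}\le 0$ and there is some $F\in\Pick$ with
\[
F(z) = a^0 + a^1(z-x) + \dots + a^n(z-x)^n + \ont((z-x)^n).
\]
But this last condition says precisely that $F$ is a weak solution, in the sense of Section \ref{taylor}, of Problem $\partial CF\Pick(\R)$ with data $x$ and $(a^0,\dots,a^n)$. So the corollary is just the conjunction of ``$a^{-1}\le 0$'' with the weak solvability criterion for $\partial CF\Pick(\R)$.

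First I would record that the Hankel matrix $H_m(a) = [a^{i+j-1}]_{i,j=1}^m$, with $n=2m-1$, involves only $a^1,\dots,a^n$ and is therefore unambiguously determined by the data of the generalized problem; the extra coefficient $a^{-1}$ plays no part in it. Then, since $n$ is odd, Theorem \ref{main_theorem} applies verbatim: Problem $\partial CF\Pick(\R)$ with data $x$ and $(a^0,\dots,a^n)$ has a weak solution if and only if $H_m(a)$ is positive definite or SE-minimally positive. Combining this equivalence with the Proposition gives exactly the stated criterion, in both directions at once.

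There is no genuine obstacle here; the only things deserving a word of care are bookkeeping. One should check that the standing hypothesis $a^1>0$ inherited from (\ref{genweak}) is consistent with the two alternatives in the conclusion --- it is, since $a^1$ is the $(1,1)$ entry of $H_m(a)$, so positive definiteness forces $a^1>0$ and SE-minimal positivity forces $a^1\ge 0$ --- and that the indexing in the definition of $H_m$ matches that used in Theorem \ref{main_theorem}. With these points verified the corollary follows in a couple of lines.
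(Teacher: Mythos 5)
Your proposal is correct and is exactly the argument the paper intends: the corollary is the immediate conjunction of the preceding Proposition (which reduces the generalized problem to the standard weak problem plus $a^{-1}\leq 0$) with the equivalence (1) $\Leftrightarrow$ (5) of Theorem \ref{main_theorem}. The bookkeeping points you raise (that $H_m(a)$ involves only $a^1,\dots,a^n$, and that the hypothesis $a^1>0$ is consistent with both alternatives) are sound and do not hide any gap.
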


\section{Appendix} \label{appendix}
Here we justify the assertions made concerning Example \ref{ex2}.  We show that, for any integer $\nu \geq 4$, the function
\beq \label{deffnu}
f_\nu(z) = -\sum_{k=1}^\infty \frac{1}{k^\nu z + k^{\nu-1}}
\eeq
belongs to the Pick class, has a pseudo-Taylor expansion of order $\nu-3$ given by equation (\ref{expandfnu}) and has no expansion of order $\nu-2$.

The series (\ref{deffnu}) converges locally uniformly in $\Pi$, and so $f$ is analytic in $\Pi$. Since each summand belongs to $\Pick$, so does $f_{\nu}$. 
For $j = 0,1, \dots, \nu -3$, we obtain, with the aid of  the Dominated Convergence Theorem,
\begin{eqnarray*}
\lim_{y \to 0+} \frac{f_{\nu}^{(j)}(\ii y)}{j!}& =&
\lim_{y \to 0+} \sum_{k=1}^{\infty} 
\frac{(-1)^{j+1}}{k^{\nu} \left(\ii y + \tfrac{1}{k} \right)^{j+1}}\\
& =&(-1)^{j+1} \zeta(\nu - j -1).
\end{eqnarray*}
and so, by Theorem \ref{Taylor}, the expansion (\ref{expandfnu}) is indeed a pseudo-Taylor expansion of $f_\nu$ of order $\nu-3$.
However $f_{\nu}$ does not have a pseudo-Taylor expansion 
of order $\nu-2$.
In view of Theorem \ref{Taylor}, this claim will follow if we can show that $f_{\nu}^{(\nu-2)}(\ii y)$ does not have a finite limit as $y \to 0+$. We have
\[
\frac{f_{\nu}^{(\nu -2)}(\ii y)}{(\nu -2)!} = (-1)^{\nu -1} \sum_{k=1}^{\infty} h_{\nu}(k, y)
\]
where, for $t \ge 1$ and $y > 0$,
\[
h_{\nu}(t, y)= \frac{1}{t^{\nu}\left(\ii y + \tfrac{1}{t} \right)^{\nu -1}} = \frac{1}{t \left(1 + \ii t y \right)^{\nu -1}}.
\]
Now
\begin{eqnarray*}
\int_{1}^{\infty} h_{\nu}(t, y) dt &=& \int_{1}^{\infty}\frac{dt}{t \left(1 + \ii t y \right)^{\nu -1}} = \int_{y}^{\infty}\frac{du}{u \left(1 + \ii u \right)^{\nu -1}}\\
&=& - \sum_{j=1}^{\nu -2} \frac{1}{j \left(1 + \ii y \right)^{j}}
- \log \frac{\ii y}{1 + \ii  y} \to \infty
\end{eqnarray*}
as $y \to 0+$, while
\[
\left|\frac{\partial h_{\nu}}{\partial t} (t,y) \right| =
\frac{1}{t^2} \left|\frac{1+ \nu \ii t y}{ \left(1 + \ii t y \right)^{\nu}} \right|.
\]
Hence, for $t \in [k, k+1]$ and  $y > 0$,
\[
\left|\frac{\partial h_{\nu}}{\partial t} (t,y) \right| \le \frac{C_{\nu}}{k^2},
\]
where
\[
C_{\nu} = \sup_{y >0, t \ge 1} \left|\frac{1+ \nu \ii t y}{ \left(1 + \ii t y \right)^{\nu}} \right| =
\sup_{\tau >0} \left|\frac{1+ \nu^2 \tau}{ \left(1 + \tau \right)^{\nu}} \right|^{1/2} < \infty
\]
(in fact  $C_{\nu}^2 = {\nu}/{ \left(1 + \frac{1}{\nu} \right)^{\nu-1}}$). By the Mean Value Theorem, for $t \in [k, k+1]$,
\[
\left|h_{\nu}(t, y)- h_{\nu}(k, y)\right| \le \frac{C_{\nu}}{k^2},
\]
and so, for all $y > 0$,
\[
\left|\int_{1}^{\infty} h_{\nu}(t, y) dt  - \sum_{k=1}^{\infty} h_{\nu}(k, y) \right| \le \sum_{k=1}^{\infty} \frac{C_{\nu}}{k^2}
=\frac{C_{\nu} \pi^2}{6}.
\]
Hence
\[ 
\lim_{y\to 0+} f_\nu^{(\nu-2)} (\ii y) = (-1)^{\nu-1}(\nu-2)! \lim_{y \to 0+} \sum_{k=1}^{\infty} h_{\nu}(k, y) = \infty.
\]
Thus, as claimed, $f_{\nu}$ does not have a pseudo-Taylor expansion 
of order $\nu-2$ at $0$.


JIM AGLER, Department of Mathematics, University of California at San Diego, CA \textup{92103}, USA\\

ZINAIDA A. LYKOVA,
School of Mathematics and Statistics, Newcastle University,
 NE\textup{1} \textup{7}RU, U.K.~~
e-mail\textup{: \texttt{Z.A.Lykova@newcastle.ac.uk}}\\

N. J. YOUNG, School of Mathematics, Leeds University, LS2 9JT, U.K.~~
e-mail\textup{: \texttt{N.J.Young@leeds.ac.uk}}


\begin{thebibliography}{99}
\bibitem{AMcCY10}  {\sc  J. Agler, J. E. McCarthy and N. J. Young}. Facial behaviour of analytic functions on the bidisk. arXiv:1003.3400v2 [math.CV] (2010), to appear in \emph{Bull. London Math. Soc.} 

\bibitem{ALY10}  {\sc   J. Agler, Z.A. Lykova and N. J. Young}. 
The boundary Carath\'{e}odory-Fej\'{e}r interpolation problem.
arXiv:1011.1399v2 [math.CV] (2010).

\bibitem{BakNew}  
{\sc J. Bak and D. J. Newman}. \emph{Complex Analysis}. 3rd. ed. (Springer Verlag, New York, 2010). 

\bibitem{bgr} {\sc  J. A. Ball, I. Gohberg and L. Rodman}. \emph{Interpolation of Rational Matrix Functions}. Operator Theory: Advances and Applications \textbf{45} (Birkh\"auser Verlag, Basel, 1990). 

\bibitem{Bol10} {\sc  V. Bolotnikov}.  The boundary analog of the  Carath\'eodory-Schur interpolation problem. arXiv:1008.3364v1 [math.CA] (2010). 

\bibitem{BD} {\sc  V. Bolotnikov and H. Dym}.  On boundary interpolation for matrix valued Schur functions. \emph{AMS Memoirs} \textbf{181} (2006) Number 856 1--107. 

\bibitem{BolKh08}  {\sc  V. Bolotnikov and A. Kheifets}.
The higher-order Carath\'eodory-Julia theorem and related boundary interpolation problems. \emph{Recent advances in matrix and operator theory}. Operator Theory: Advances and Applications Vol. \textbf{179}  63--102  (Birkh\"auser, Basel, 2008). 

\bibitem{CF} {\sc  C. Carath\'eodory and L. Fej\'{e}r}. \"Uber den Zusammenhang der Extremen von harmonischen Funktionen mit ihren Koeffizienten und \"uber den Picard-Landau'schen Satz. \emph{Rend. Circ. Mat. Palermo} \textbf{32} (1911) 218--239. 

\bibitem{car54}
{\sc  C.~Carath\'eodory},
 \emph{Theory of functions, Vol. II}.
(Chelsea Publishing Company, New York, 1954). 

\bibitem{Geo98}
 {\sc  D. R. Georgijevi\'c}. Solvability condition for a boundary value interpolation problem of Loewner type. \emph{J. Analyse Math\'ematique} \textbf{74} (1998) 213--234. 

\bibitem{Geo05} {\sc  D. R. Georgijevi\'c}. Mixed L$\ddot{\rm o}$wner and Nevanlinna-Pick Interpolation.  \emph{Integral Equations and Operator Theory}  \textbf{53} (2005) 247--267. 

\bibitem{Ju20}  {\sc  G. Julia}.  Extension nouvelle d'un lemme de Schwarz. \emph{Acta Math.} \textbf{ 42}(1920) 349--355. 

\bibitem{Kr} {\sc S. G. Krantz}. {\em Function theory of several complex variables}.  (John Wiley and Sons, New York, 1982). 

\bibitem{Nev1}  {\sc  R. Nevanlinna}. Kriterien f\"ur die Randwerte beschr\"{a}nkter Funktionen. \emph{Math. Zeitschrift} \textbf{13} (1922) 1--9. 


\bibitem{Nev1922}   {\sc  R. Nevanlinna}. Asymptotische Entwicklungen Beschr$\ddot{\rm a}$nkter Funktionen und das Stieltjessche Momentenproblem.  \emph{Ann. Acad. Sci. Fenn. Ser. A}  \textbf{18}(5) (1922) 53 pp. 

\bibitem{S2} {\sc D. Sarason}. \emph{Sub-Hardy Hilbert spaces in the unit disk}. University of Arkansas Lecture Notes in the Mathematical Sciences (John Wiley and Sons, New York, 1994). \\


\end{thebibliography}
\end{document}